\renewcommand{\P}{{\mathbb  P}}
\newcommand{\C}{{\mathbb  C}}
\newcommand{\R}{{\mathbb  R}}
\newcommand{\D}{{\mathbb D}}
\newcommand{\T}{{\mathbb  T}}
\newcommand{\A}{{\mathcal A}}
\newcommand{\F}{{\mathcal F}}
\renewcommand{\L}{{\mathcal L}}
\renewcommand{\O}{{\mathcal O}}
\newcommand{\PSH}{{\operatorname{\mathcal {PSH}}}}
\renewcommand{\phi}{\varphi}
\renewcommand{\epsilon}{\varepsilon}
 \titleformat{\section}{\normalfont\filcenter}{\thesection}{1em}{\uppercase}
\newtheorem{theorem+}           {Theorem}      [section]
\newtheorem{definition+}  [theorem+]  {Definition}
\newtheorem{lemma+}  [theorem+]  {Lemma}
\newtheorem{corollary+}  [theorem+]  {Corollary}
\newtheorem{proposition+}  [theorem+]  {Proposition}
\newtheorem{example+}  [theorem+]  {Example}
\newtheorem{question+}  [theorem+]  {Question}
\newenvironment{theorem}{\begin{theorem+}\sl}{\end{theorem+}\rm}
\newenvironment{definition}{\begin{definition+}\rm}{\end{definition+}\rm}
\newenvironment{lemma}{\begin{lemma+}\sl}{\end{lemma+}\rm}
\newenvironment{corollary}{\begin{corollary+}\sl}{\end{corollary+}\rm}
\newenvironment{proposition}{\begin{proposition+}\sl}{\end{proposition+}\rm}
\title{Analytic Discs, Global Extremal Functions and Projective Hulls in Projective Space
%Global extremal functions and projective hulls
%A note on plurisubharmonic subextensions and the Siciak-Zahariuta function
}
\author{Benedikt Steinar Magn\'usson }
\date{\today}
\begin{document}
\maketitle
\begin{abstract}
 Using a recent result of Lárusson and Poletsky regarding 
 plurisubharmonic subextensions we prove a disc 
 formula for the quasiplurisubharmonic global extremal function for domains 
 in $\P^n$.
 As a corollary 
 we get a characterization of the projective hull for connected compact sets in 
 $\P^n$ by 
 the existence of analytic discs.
 %Finally we give a new proof of
 %the disc formula for the Siciak-Zahariuta extremal functions for domains. 
\end{abstract}

%%%%%%%%%%%%%%%%%%%%%%%%%%%%%%%%%%%%%%%%%%%%%%%%%%%%%%%%%%%%%%%%%%%%%%%%%%%%%%
%%%%%%%%%%%%%%%%%%%%%%%%%%%%%%%%%%%%%%%%%%%%%%%%%%%%%%%%%%%%%%%%%%%%%%%%%%%%%%
\section{Introduction}\label{sec:intro}
The global extremal function, also called the
Siciak-Zahariuta extremal function,
has proven very useful for pluripotential theory in $\C^n$, see
\cite[\S 13]{Kis:psh_scv} and \cite[\S 5]{Klimek} for an overview
of the applications.
We are however most interested in its counterpart in the theory of quasiplurisubharmonic 
functions on compact manifolds. The quasiplurisubharmonic global extremal function was 
defined by Guedj and Zeriahi \cite{GueZer:2005} and has already proven useful,
most notably in connection with projective hulls \cite{HarLaw:2006}. 
But the projective hull of a compact set in $\P^n$ 
is the natural generalization of 
the polynomial hull in $\C^n$.

We start
by looking at a recent result of Lárusson and Poletsky
\cite{LarPol:2012} regarding plurisubharmonic subextensions 
for domains in $\C^n$. There we make a small observation regarding their results
(Corollary \ref{cor:main}), and we also define a disc structure for 
sets in $\C^{n+1} \setminus \{0\}$ with some nice properties.
This is done in Section \ref{sec:subextensions}.

In Section \ref{sec:omega} we turn our attention
to quasiplurisubharmonic function, or $\omega$-plurisubharmonic functions,
on $\P^n$ which we denote by $\PSH(\P^n,\omega)$. The current $\omega$ is here
the Fubini-Study K\"ahler form. 
Using the results from the Section \ref{sec:subextensions}
we prove a disc formula for the
global extremal function for a domain $W \subset \P^n$ 
(Theorem \ref{th:omega}),
\begin{multline*}
  \sup\{ u(x) ; u \in \PSH(\P^n,\omega), u|_W \leq \phi \} = \\
  \inf \left\{ -\frac{1}{2\pi}\int_\D \log|\cdot|\, f^*\omega +
  \int_\T \phi \circ f\, d\sigma\ ;\ f \in \A_{\P^n}^W, f(0) = x \right\}.
\end{multline*}

In Section \ref{sec:projective_hulls} we show an applications 
of these results (Theorem \ref{th:hulls}). There we show that the points 
in the projective hull $\hat K$
of connected compact sets $K \subset \P^n$
can be characterized by the existence of analytic discs 
with specific properties. That is, for $\Lambda >0$ and a connected compact subset 
 $K\subset \P^n$ the following is equivalent
 for a point $x \in \P^n$:
 \begin{itemize}
  \item[(A)] $x \in \hat K(\Lambda)$
  \item[(B)] For every $\epsilon >0$ and every neighbourhood $U$ of $K$ 
  there exists a disc
  $f \in \A_{\P^n}^U$ such that $f(0)=x$ and 
  $$
		- \frac{1}{2\pi}\int_\D \log|\cdot| f^*\omega < \Lambda + \epsilon.
	$$
 \end{itemize}
 Here, $\hat K(\Lambda)$, $\Lambda > 0$, are specific subsets of $\hat K$ with $\hat K
 = \cup_\Lambda \hat K(\Lambda)$ which are defined using the best constant function for $K$ (see
 \cite[\S 4]{HarLaw:2006}).

Finally, in Section \ref{sec:S-Z}, we see how the methods presented in 
Section \ref{sec:subextensions} and \ref{sec:omega} 
work also for other currents, in particular for the current of integration
for the hyperplane at infinity $H_\infty \subset \P^n$. 
This gives rise to a disc formula for 
the Siciak-Zahariuta extremal function for a domain $W\subset \C^n$, 
\begin{multline}\label{eq:S-Z}
  \sup\{ u(x) ; u \in \L, u|_W \leq \phi \} = \\
  \inf \left\{ -\sum_{a \in f^{-1}(H_\infty) } \log|a| +
  \int_\T \phi \circ f\, d\sigma\ ;\ f \in \A_{\P^n}^W, f(0) = x \right\},
\end{multline}
where $\L$ is the Lelong-class of plurisubharmonic functions of 
logarithmic growth. This formula was first proved in \cite{LarSig:2005}, 
when $\phi=0$, and in \cite{MagSig:2007}, when $\phi$ is upper semicontinuous.
For the case when $W$ is not connected see \cite{LarSig:2009}.

\bigskip

We now must establish some notation. 
We assume $X$ is a complex manifold, 
here the role of $X$ will either be played 
by subsets of affine space or projective space.
Let $\A_{X}$ denote the family of 
closed analytic discs in $X$, that is 
continuous maps $f:\overline \D \to X$ which are 
holomorphic on the unit disc $\D$. 
Assume $W \subset X$, then 
$\A_{X}^W$ is the subset of discs in $\A_X$ which map the unit circle 
$\T$ into $W$.

If $H$ is a \emph{disc functional}, that is a function from a subset 
of $\A_X$ to $[-\infty,+\infty]$, then its 
\emph{envelope} with respect to the family 
$\mathcal C \subset \A_X$ is defined as the function
$$
	E_{\mathcal C}H(x) = \inf\{ H(f) ; f\in \mathcal C, f(0) = x \}.
$$
The domain of $E_{\mathcal C}H$ is all the points $x\in X$
such that $\{ f\in \mathcal C ; f(0)=x\}$ is non-empty.

For convenience we write $E$ for $E_{\A_X}$  and $E_W$ for $E_{\A_X^W}$.

The standard example of a disc functional is the \emph{Poisson disc 
functional} $H_\phi:\A_X \to [-\infty,+\infty]$ for a 
function $\phi:X \to \R\cup \{-\infty\}$. It 
is defined as  $H_\phi(f) = \int_\T \phi\circ f\, d\sigma$.
The measure $\sigma$ is the arclength measure on $\T$ normalized to one.
Other disc functional we will use are the 
Poisson disc functional for the class of $\omega$-plurisubharmonic
functions
$$
	H_{\omega,\phi}(f) = 
	- \frac{1}{2\pi}\int_\D \log|\cdot|\, f^*\omega 
	+ \int_\T \phi\circ f\, d\sigma ,
$$
where $f^*\omega$ is the pullback of $\omega$ by $f$. 
In our case the $(1,1)$-current $\omega$ will be 
the Fubini-Study K\"ahler form on $\P^n$. However, in Section \ref{sec:S-Z}
we look briefly at other currents on $\P^n$, specially the case 
when the current is the current
of integration for the hyperplane at infinity. 
% $H_\infty \subset \P^n$.
% In the later case the term $\int_\D \log|\cdot|\, f^*\omega$
% becomes $\sum_{a\in f^{-1}(H_\infty)} m_a\log|a|$, where $m_a$ is 
% the multiplicity of the intersection of with $H_\infty$, i.e.~the 
% order of the zero of the zero component $f_0$ of $f=[f_0:\cdots:f_n]$.

If $\phi$ is a function defined on $W \subset X$ then we let 
$$
	\F_\phi = \{ u \in \PSH(X) ; u|_W \leq \phi \},
$$
$$
	\F_{\omega,\phi} = \{ u \in \PSH(X,\omega) ; u|_W \leq \phi \}.
$$

\medskip

\textbf{Remark:} 
Although it is more traditional to look at analytic discs which are holomorphic 
in a neighbourhood of the closed unit disc we are only assuming the discs are
continuous to the boundary. This does in fact not alter the results 
obtained here since every disc holomorphic in a neighbourhood of 
$\overline \D$ is clearly in $\A_X$, and conversely if $f\in \A_X$
then $f(r\cdot)$, $r<1$ is a family of discs in holomorphic in a 
neighbourhood of $\D$ such that
$H_{\omega,\phi}(f(r\cdot)) \to H_{\omega,\phi}(f)$, when $r\to 1^-$. 
The reason for this is that the authors of \cite{LarPol:2012} applied a 
results of Forstneri\v{c} 
\cite{For:2007} which uses discs which are
only continuous up to the boundary.

%%%%%%%%%%%%%%%%%%%%%%%%%%%%%%%%%%%%%%%%%%%%%%%%%%%%%%%%%%%%%%%%%%%%%%%%%%%%%%
%%%%%%%%%%%%%%%%%%%%%%%%%%%%%%%%%%%%%%%%%%%%%%%%%%%%%%%%%%%%%%%%%%%%%%%%%%%%%%
\section{Plurisubharmonic subextensions}
\label{sec:subextensions}
We now turn our attentions to the work done by Lárusson and Poletsky
\cite{LarPol:2012}.

Their setting is the following. For domains $W\subset X\subset \C^n$
and an upper semicontinuous function $\phi:W \to \R \subset \{-\infty\}$, 
they consider the function 
$$
	\sup \F_\phi(x) = \sup \{ u(x) ; u \in \F_\phi \},
$$
which is largest plurisubharmonic function on $X$, dominated by 
$\phi$ on $W$.
Under sufficient condition on $W$ and $X$ they prove a disc 
formula for this function, namely that $\sup F_\phi = E_W H_\phi$, 
or if we write it out
$$
	\sup\{ u(x) ; u \in \PSH(X), u(\T) \subset W \} 
	= \inf \left\{ \int_\T \phi \circ f\, d\sigma ; f \in \A_X^W, f(0) = x \right\}.
$$

Before we look at this formula in more detail we need the following 
definitions.

\begin{definition}\label{def:centre-homo}
We say that two discs $f_0$ and $f_1$ in $\A_X^W$ with $f_0(0) = f_1(0)$ are 
\emph{centre-homotopic} if there is a continuous map $f:\overline \D \times
[0,1] \to X$ such that
\begin{itemize}
    \item $f(\cdot,t) \in \A_X^W$ for all $t\in [0,1]$, 
    \item $f(\cdot,0) = f_0$ and $f(\cdot,1) = f_1$, 
    \item $f(0,t) = f_0(0)  =f_1(0)$ for all $t\in [0,1]$.
\end{itemize}
\end{definition}

\begin{definition}
If $ W \subset X$, then a \emph{$W$-disc structure} on $X$ is a family $\beta = (\beta_\nu)_\nu$ of continuous
maps $\beta_\nu : U_\nu \to \A_X^W$, where $(U_\nu)_\nu$ is an open covering of $X$, such that
\begin{itemize}
  \item $\beta_\nu(x)(0) = x$ for all $x\in U_\nu$
  (i.e.~$x$ is mapped to a disc centred at $x$),
  %(i.e.~each point is mapped to a disc centred at that point)
  \item If $x \in U_\nu \cap U_\mu$ then $\beta_\nu(x)$ and $\beta_\mu(x)$ are centre-homotopic.
%\flushright (S)
\end{itemize}
Furthermore, if there is $\mu$ such that $U_\mu = W$ and $\beta_\mu(w)(\cdot) = w$ for 
every $w \in W$ (i.e.~$\beta_\mu(w)$ is the constant disc), then we say that the disc structure is \emph{schlicht}.
\end{definition}

For a $W$-disc structure $\beta$ we let $\mathcal B \subset \A_X^W$ 
denote the family of discs $ \mathcal B = \cup_\nu \beta_\nu(U_\nu)$.

\begin{lemma}[\cite{LarPol:2012}, Lemma 2]\label{lem:Lemma2}
Let $W \subset X$ be domains in $\C^n$, and $\beta$ a $W$-disc structure on $X$. 
If $\phi : W \to \R \subset \{-\infty\}$ is an upper semicontinuous function then
$$
  E_W H_\phi \leq EH_{E_{\mathcal B} H_\phi}.
$$
\end{lemma}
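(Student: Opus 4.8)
The plan is to prove the inequality pointwise by producing, for each $x$ and each competitor disc on the right, a $W$-disc on the left of comparable Poisson cost. Write $\psi = E_{\mathcal B}H_\phi$, so that the right-hand side at $x$ equals $\inf\{\int_\T \psi\circ g\, d\sigma ; g\in\A_X, g(0)=x\}$. Fix such an outer disc $g$ with $g(0)=x$ and an $\epsilon>0$; it then suffices to construct $F\in\A_X^W$ with $F(0)=x$ and $H_\phi(F)\leq \int_\T \psi\circ g\, d\sigma + \epsilon$, since taking the infimum over $g$ and letting $\epsilon\to 0$ yields $E_WH_\phi(x)\leq EH_\psi(x)$.

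First I would build a continuous selection of $W$-discs along the boundary loop $g(\T)$. Since this loop is compact it is covered by finitely many of the sets $U_\nu$, and on each the map $\beta_\nu$ gives a continuous assignment $\zeta\mapsto \beta_\nu(g(\zeta))$ of discs in $\A_X^W$ centred at $g(\zeta)$. By definition of $\mathcal B$, the value $\psi(g(\zeta))$ is the infimum of $H_\phi(\beta_\nu(g(\zeta)))$ over those $\nu$ with $g(\zeta)\in U_\nu$, so after partitioning $\T$ into finitely many arcs I can pick on each arc an index $\nu$ that nearly realizes this infimum. The difficulty is that the optimal index jumps between arcs, so a naive choice is discontinuous. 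This is exactly where the centre-homotopic axiom enters: over short overlap arcs I interpolate between $\beta_\nu(g(\zeta))$ and $\beta_\mu(g(\zeta))$ through the homotopy from the definition, which stays inside $\A_X^W$ and fixes the centre at $g(\zeta)$. As $\phi$ is upper semicontinuous it is bounded above on the compact image of the homotopy, so shrinking the overlap arcs makes their contribution to the integral at most $\epsilon/2$, giving a continuous family $\zeta\mapsto h_\zeta\in\A_X^W$ with $h_\zeta(0)=g(\zeta)$ and $\int_\T H_\phi(h_\zeta)\, d\sigma \leq \int_\T \psi\circ g\, d\sigma + \epsilon/2$.

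The second and main step is to glue this continuously varying boundary family onto $g$. Attaching $h_\zeta$ at the boundary point $g(\zeta)$ replaces $g(\T)$ by the union of the circles $h_\zeta(\T)\subset W$, so the glued object is a $W$-disc centred at $x$, while its Poisson integral is, up to an arbitrarily small error, the iterated average $\int_\T H_\phi(h_\zeta)\, d\sigma$. Realizing this heuristic by a genuine disc $F\in\A_X^W$, continuous up to the boundary and with $H_\phi(F)\leq \int_\T H_\phi(h_\zeta)\, d\sigma + \epsilon/2$, is the technical heart of the argument, and it is here that one invokes the gluing construction of Forstneri\v{c} used by Lárusson and Poletsky, whose output discs are only continuous to the boundary (which is why the class $\A_X$ is taken that way). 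Granting this, $F(0)=x$ and $H_\phi(F)\leq \int_\T \psi\circ g\, d\sigma + \epsilon$, so $E_WH_\phi(x)\leq H_\phi(F)$ gives the bound after passing to the infimum over $g$ and $\epsilon\to 0$.

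I expect the gluing of the last paragraph to be the principal obstacle, both because it rests on the external approximation theorem and because one must control the Poisson functional of the glued disc; the continuous-selection step is more bookkeeping, but it genuinely uses all three features of a $W$-disc structure, namely the open cover, the continuity of the $\beta_\nu$, and the centre-homotopic compatibility on overlaps.
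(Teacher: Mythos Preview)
The paper does not contain a proof of this lemma; it is quoted verbatim from \cite{LarPol:2012} and used as a black box in the proof of Theorem~\ref{th:L-P}. So there is nothing in the present paper to compare your argument against line by line.

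That said, your sketch matches the strategy of the original L\'arusson--Poletsky proof, and the paper's Remark at the end of Section~\ref{sec:intro} confirms the key external input you identified: the gluing of the boundary family of discs onto the outer disc $g$ is carried out via Forstneri\v{c}'s approximation theorem \cite{For:2007}, and this is precisely why the disc classes $\A_X$ are taken to be merely continuous up to the boundary. Your two-step outline (continuous selection along $g(\T)$ using the open cover and the centre-homotopies to bridge overlaps, then gluing to a single $W$-disc with controlled Poisson cost) is the right architecture, and your identification of the gluing as the technical heart is accurate. The only point I would tighten is that what Forstneri\v{c}'s theorem actually delivers is an approximation of the continuous family $(g,\{h_\zeta\})$ by a genuine holomorphic disc $F$; the control on $H_\phi(F)$ comes from this approximation together with the upper semicontinuity of $\phi$, not from an exact iterated-integral identity, so the ``up to an arbitrarily small error'' in your last step needs to be routed through that approximation statement rather than asserted directly.
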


If $\beta$ is a $W$-disc structure on $X$ and $\phi$ is upper semicontinuous 
then it follows easily from the continuity of the $\beta_\nu$'s that 
$E_{\mathcal B}H_\phi$ is an upper semicontinuous function on $X$. 

\begin{theorem}[\cite{LarPol:2012}, Theorem 3]
Let $W \subset X$ be domains in $\C^n$ and assume $\beta$ is a schlicht $W$-disc
structure on $W$, then
$$
  \sup \F_\phi = E_W H_\phi.
$$\label{th:L-P}
\end{theorem}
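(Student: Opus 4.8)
The plan is to prove the two inequalities $\sup\F_\phi \le E_W H_\phi$ and $E_W H_\phi \le \sup\F_\phi$ separately, using Lemma \ref{lem:Lemma2} as the main engine for the second. The first is the elementary half: fix $u \in \F_\phi$ and a disc $f \in \A_X^W$ with $f(0)=x$. Since $u$ is plurisubharmonic and $f$ holomorphic, $u\circ f$ is subharmonic on $\D$ and upper semicontinuous on $\overline\D$, so the sub-mean value inequality gives $u(x) = u(f(0)) \le \int_\T u\circ f\,d\sigma$. As $f(\T)\subset W$ and $u|_W \le \phi$, the boundary integrand satisfies $u\circ f \le \phi\circ f$ on $\T$, whence $u(x) \le \int_\T \phi\circ f\,d\sigma = H_\phi(f)$. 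Taking the infimum over admissible $f$ and then the supremum over $u\in\F_\phi$ yields $\sup\F_\phi(x) \le E_W H_\phi(x)$ wherever the right-hand side is defined.

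For the reverse inequality I would set $\psi := E_{\mathcal B}H_\phi$, which by the remark following Lemma \ref{lem:Lemma2} is upper semicontinuous on $X$. Lemma \ref{lem:Lemma2} then gives $E_W H_\phi \le E H_\psi$, so it suffices to show $E H_\psi \le \sup\F_\phi$. Here the schlicht hypothesis enters decisively: for $w\in W$ the chart $\beta_\mu$ with $U_\mu = W$ produces the constant disc $\beta_\mu(w)\equiv w$, which lies in $\mathcal B$ and is centred at $w$, so $\psi(w) \le H_\phi(\beta_\mu(w)) = \int_\T \phi(w)\,d\sigma = \phi(w)$. Thus $\psi \le \phi$ on $W$.

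Finally I would invoke Poletsky's classical disc formula for the unrestricted envelope: since $\psi$ is upper semicontinuous, $E H_\psi = \sup\{u \in \PSH(X)\,;\, u \le \psi\}$, the largest plurisubharmonic minorant of $\psi$. Any $u\in\PSH(X)$ with $u\le\psi$ satisfies $u|_W \le \psi|_W \le \phi$ by the previous step, so $u\in\F_\phi$; hence $E H_\psi \le \sup\F_\phi$. Combining this with Lemma \ref{lem:Lemma2} gives $E_W H_\phi \le \sup\F_\phi$, which together with the first inequality establishes the equality.

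The genuine difficulty of the theorem is concentrated in Lemma \ref{lem:Lemma2}, whose proof rests on Forstneri\v{c}'s gluing of analytic discs; once that lemma is available, the remaining obstacle is comparatively mild and lies in two places: confirming the upper semicontinuity of $\psi = E_{\mathcal B}H_\phi$ (so that Poletsky's formula applies), and the schlichtness argument that transfers the bound $\psi\le\phi$ from a single constant disc to all of $W$. I expect the latter to be the step where the full strength of the \emph{schlicht} assumption, as opposed to an arbitrary $W$-disc structure, is indispensable.
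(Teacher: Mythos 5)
Your proposal is correct and follows essentially the same route as the paper: the same chain $\sup\F_\phi \le E_W H_\phi \le EH_{E_{\mathcal B}H_\phi} \le \sup\F_\phi$, with the sub-mean value inequality for the first step, Lemma \ref{lem:Lemma2} for the second, and schlichtness plus Poletsky for the third. The only (immaterial) difference is in the last step, where the paper uses the weaker form of Poletsky's theorem — that $EH_\psi$ is plurisubharmonic — together with the trivial bound $EH_\psi \le \psi \le \phi$ on $W$ to place $EH_\psi$ itself in $\F_\phi$, whereas you invoke the full disc formula $EH_\psi = \sup\{u\in\PSH(X)\,;\,u\le\psi\}$ and check that each competitor lies in $\F_\phi$.
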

\begin{proof} The formula follows from the following inequalities,
$$
  \sup \F_\phi 
  \leq E_W H_\phi 
  \leq EH_{E_{\mathcal B} H_\phi} 
  \leq \sup \F_\phi.
$$
The first inequality follows from the subaverage property of the 
subharmonic function $u\circ f$. 
If $u \in \F_\phi$ and $f \in \A_X^W$, $f(0)=x$ then 
$$
	u(x) = (u\circ f)(0) \leq \int_\T u\circ f\, d\sigma \leq \int_\T
\phi \circ f\, d\sigma = H_\phi(f).
$$ 
Taking supremum on the left hand
side over $u \in \F_\phi$ and infimum on the right hand side over $f\in \A_X^W$
gives the inequality.

Lemma \ref{lem:Lemma2} gives the second inequality.

The last inequality follows from the fact that the  function 
$EH_{E_{\mathcal B} H_\phi}$
is plurisubharmonic by Poletsky's theorem 
\cite{Pol:1991,Pol:1993} and not greater than $\phi$ since 
$\beta$ is schlicht. It is therefore in the class $\F_\phi$ we take
supremum over.
\end{proof}

Note that $EH_{E_{\mathcal B} H_\phi}$ is always plurisubharmonic when $\phi$ is upper
semicontinuous because then ${E_{\mathcal B} H_\phi}$ is upper semicontinuous.
The only requirements for the last inequality are therefore that
$EH_{E_{\mathcal B} H_\phi} \leq \phi$ on $W$. From this small observation 
we get the following corollary.

\begin{corollary}\label{cor:main}
Let $W \subset X$ be domains in $\C^n$ and assume $\beta$ is a $W$-disc
structure on $W$ such that $E_{\mathcal B} H_\phi \leq \phi$ on $W$, then
$$
  \sup \F_\phi = E_W H_\phi.
$$
\end{corollary}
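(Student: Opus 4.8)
The plan is to reuse, essentially verbatim, the three-step chain of inequalities from the proof of Theorem \ref{th:L-P}, namely
$$
\sup \F_\phi \leq E_W H_\phi \leq EH_{E_{\mathcal B} H_\phi} \leq \sup \F_\phi,
$$
and to observe that the schlicht hypothesis was only ever consumed in the last of these inequalities. The first two steps carry over unchanged, so I would dispose of them quickly. The inequality $\sup \F_\phi \leq E_W H_\phi$ is the subaverage estimate for the subharmonic functions $u\circ f$ with $u\in\F_\phi$ and $f\in\A_X^W$, and uses nothing about $\beta$. The inequality $E_W H_\phi \leq EH_{E_{\mathcal B} H_\phi}$ is exactly the statement of Lemma \ref{lem:Lemma2}, which holds for any $W$-disc structure $\beta$ and any upper semicontinuous $\phi$, again with no schlichtness assumption.

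Thus I would concentrate entirely on the final inequality $EH_{E_{\mathcal B} H_\phi} \leq \sup \F_\phi$. As in the original argument this reduces to showing $EH_{E_{\mathcal B} H_\phi} \in \F_\phi$, after which the inequality is immediate from the definition of $\sup \F_\phi$ as a supremum over that class. Membership in $\F_\phi$ has two parts. For plurisubharmonicity I would invoke the remark preceding the statement: since $\phi$ is upper semicontinuous and the maps $\beta_\nu$ are continuous, $E_{\mathcal B} H_\phi$ is upper semicontinuous, whence $EH_{E_{\mathcal B} H_\phi}$ is plurisubharmonic by Poletsky's theorem. This step never used schlichtness in the first place, so it is inherited with no modification.

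The single place where the new hypothesis $E_{\mathcal B} H_\phi \leq \phi$ on $W$ enters is the bound $EH_{E_{\mathcal B} H_\phi} \leq \phi$ on $W$, and here I would simply test the envelope against the constant disc. For a fixed $w \in W$ the constant map $f_w \equiv w$ lies in $\A_X$ with $f_w(0)=w$, so
$$
EH_{E_{\mathcal B} H_\phi}(w) \leq \int_\T E_{\mathcal B} H_\phi \circ f_w\, d\sigma = E_{\mathcal B} H_\phi(w) \leq \phi(w),
$$
the last inequality being precisely the hypothesis. This recovers exactly the role schlichtness played before: when $\beta$ is schlicht the constant disc $\beta_\mu(w)\equiv w$ already belongs to $\mathcal B$, forcing $E_{\mathcal B} H_\phi(w) \leq H_\phi(\beta_\mu(w)) = \phi(w)$, so the condition $E_{\mathcal B} H_\phi \leq \phi$ on $W$ is a genuine weakening of schlichtness rather than a new ingredient.

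I do not expect any real obstacle here; the argument is a bookkeeping refinement of Theorem \ref{th:L-P}. The only point requiring care is isolating which inequality actually depended on schlichtness and verifying that the constant-disc test delivers the pointwise bound on $W$ directly from $E_{\mathcal B} H_\phi \leq \phi$. Everything else in the chain is transported unchanged.
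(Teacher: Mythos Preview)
Your proposal is correct and follows essentially the same route as the paper: the paper invokes the ``fundamental property'' $EH_\psi \leq \psi$ with $\psi = E_{\mathcal B} H_\phi$ to obtain $EH_{E_{\mathcal B} H_\phi} \leq E_{\mathcal B} H_\phi \leq \phi$ on $W$, which is exactly your constant-disc test spelled out, and then says the rest is the same as in Theorem~\ref{th:L-P}. Your write-up is simply a more explicit version of the paper's one-line argument.
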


\begin{proof}
A fundamental property of the envelops of the Poisson disc functional 
is that 
$EH_\psi \leq \psi$ and therefore, with $\psi = E_{\mathcal B} H_\phi$,
$$
  EH_{E_{\mathcal B} H_\phi} \leq E_{\mathcal B} H_\phi \leq \phi.
$$
The rest of the proof is then same the as in the proof of Theorem \ref{th:L-P}.
\end{proof}

We will now construct a disc structure on a certain class of 
sets in $X =\C^m \setminus \{0\}$ satisfying the condition in 
Corollary \ref{cor:main}. In Section \ref{sec:omega} and \ref{sec:projective_hulls}
we let 
$m = n+1$ and look at $\P^n$ using homogeneous coordinates 
$\pi: \C^{n+1}\setminus \{0\} \to \P^n$.

\begin{definition}
 A set $W \subset \C^m$ is a \emph{complex cone} if $\lambda x \in W$ for
 every $\lambda \in \C$ and $x \in W$. 
\end{definition}
Later, when we talk about a complex cone in $\C^{m} \setminus \{0\}$ it
is simply a complex cone in $\C^{m}$ with $0$ removed.

\begin{definition}
 Assume $W$ is a complex cone. 
 A function $\phi:W \to \R \cup \{-\infty\}$ is called \emph{logarithmically
 homogeneous} if 
 $$
	\phi(\lambda x) = \phi(x) + \log|\lambda|,
 $$
 for every $\lambda \in \C$ and $x \in W$.
\end{definition}
Note that every function on a complex cone in $\C^{m}\setminus \{0\}$ 
which is logarithmically homogeneous extends automatically over 0 and takes
the value $-\infty$ there.

\begin{lemma}\label{lem:n+1_disc_structure}
 Assume $W\subset \C^m \setminus \{0\}$, $m\geq 2$ is a complex cone and a 
 domain, and assume that $\phi:W \to \R \cup \{-\infty\}$
 is an upper semicontinuous function which is logarithmically homogeneous.
 Then there exists a $W$-disc structure
 $\mathcal B$ in $\C^m \setminus \{0\}$ such that 
 $E_{\mathcal B} H_\phi \leq \phi$.
\end{lemma}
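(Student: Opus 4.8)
The plan is to build the disc structure $\mathcal B$ out of two kinds of discs: \emph{radial} discs over $W$, which will yield the inequality $E_{\mathcal B}H_\phi\le\phi$ essentially for free, and \emph{linear} discs elsewhere, which exist because $W$ is a cone and serve only to cover the rest of $\C^m\setminus\{0\}$. Concretely, I would take the open cover consisting of $U_0=W$ together with two sets $U_1,U_2$ carrying the linear discs, define continuous maps $\beta_0,\beta_1,\beta_2$ into $\A_{\C^m\setminus\{0\}}^W$, and then verify the three requirements of a $W$-disc structure (centred discs, open cover, centre-homotopic on overlaps) one at a time.

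The core observation is the behaviour of radial discs. For $x\in W$ and any zero-free holomorphic $a:\overline\D\to\C$ with $a(0)=1$, put $f(\zeta)=a(\zeta)x$. Since $W$ is a cone and $x\in W$, the boundary $f(\T)$ lies in $W$, so $f\in\A_{\C^m\setminus\{0\}}^W$ with $f(0)=x$. Using log-homogeneity of $\phi$ and then the mean value property of the harmonic function $\log|a|$,
\[
 H_\phi(f)=\int_\T\phi(a(\zeta)x)\,d\sigma=\phi(x)+\int_\T\log|a|\,d\sigma=\phi(x)+\log|a(0)|=\phi(x).
\]
In particular the constant disc $\beta_0(x)(\zeta)=x$ lies in $\mathcal B$ for every $x\in W$ and realizes $H_\phi=\phi(x)$, so $E_{\mathcal B}H_\phi\le\phi$ on $W$. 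This is where the log-homogeneity of $\phi$ and the cone property are used decisively; note that upper semicontinuity plays no role in this computation.

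To cover the points outside $W$ I would use linear discs. Fix two linearly independent vectors $u_1,u_2\in W$ (possible since $W$ is open in $\C^m$ and $m\ge2$), let $U_i=\{x\in\C^m\setminus\{0\}: x\notin\C u_i\}$, and set $\beta_i(x)(\zeta)=x+R_i(x)\zeta u_i$ for a continuous, sufficiently large positive function $R_i(x)$. The cone property makes this work: for every $R>0$ the circle $\{R\zeta u_i:\zeta\in\T\}$ lies in $W$ and is compact, so a whole neighbourhood of it lies in $W$; scaling by $R$ enlarges this neighbourhood, and once $R_i(x)$ is large compared with $|x|$ the translate $x+R_i(x)\zeta u_i$ stays in $W$ for all $\zeta\in\T$. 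Since $u_i\notin\C x$ on $U_i$, these discs avoid $0$, hence genuinely lie in $\A_{\C^m\setminus\{0\}}^W$, are centred at $x$, and depend continuously on $x$. As $\C u_1\cap\C u_2=\{0\}$, the sets $U_1,U_2$ already cover $\C^m\setminus\{0\}$, so $U_0,U_1,U_2$ is an open cover.

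The remaining, and main, difficulty is the centre-homotopy condition on overlaps. On $U_1\cap U_2$ I would join $\beta_1(x)$ to $\beta_2(x)$ by $f_s(\zeta)=x+R\zeta u_s$, where $u_s$ is a path from $u_1$ to $u_2$ inside $W$ avoiding the complex line $\C x$; such a path exists because $W$ is a domain and removing a complex line (real codimension two, using $m\ge2$) keeps it connected, and for $R$ large the boundary stays in $W$ by the same scaling argument, while $u_s\notin\C x$ keeps the discs off the origin. The genuinely delicate overlaps are $U_0\cap U_i=W\cap U_i$, where a constant (radial) disc must be shown centre-homotopic to a linear disc, growing the boundary circle from a small loop around $x$ to the large circle near $\C u_i$ entirely within $W$ and with the disc off the origin. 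I expect this to be the heart of the argument: straight radial growth need not remain in $W$, so the homotopy must be routed through $W$ using its connectedness together with the cone action to rescale, and one must check that the growing family never meets $0$. Once all three overlap conditions are verified, $\mathcal B=\beta_0(U_0)\cup\beta_1(U_1)\cup\beta_2(U_2)$ is a $W$-disc structure, and the radial computation above gives $E_{\mathcal B}H_\phi\le\phi$, completing the proof.
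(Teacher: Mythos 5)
Your proposal has a genuine gap at exactly the point you flag as ``the heart of the argument'': the centre-homotopy, for $x\in W\cap U_i$, between the constant disc $\beta_0(x)$ and the large linear disc $\beta_i(x)(\zeta)=x+R_i(x)\zeta u_i$. This is not a routine verification that can be deferred. The obvious interpolation $x+sR\zeta u_i$ fails because the intermediate boundary circles $x+sR\,\T u_i$ need not lie in $W$ for intermediate $s$; more structurally, the set $\Omega_x=\{y\in\C^m : x+\T y\subset W\}$ contains a neighbourhood of $0$ and the large multiples of $u_i$, but there is no reason for these to lie in the same path component of $\Omega_x$ for a general cone domain $W$, so ``routing the homotopy through $W$'' is precisely the unproved content. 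Note that if this homotopy could be carried out, your disc structure would be \emph{schlicht} and Theorem \ref{th:L-P} would apply directly; the whole reason the paper proves Corollary \ref{cor:main} (relaxing schlichtness to the condition $E_{\mathcal B}H_\phi\le\phi$ on $W$) is to avoid having to put constant discs into the family at all.

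The paper's construction sidesteps your difficulty by using a single uniform family of \emph{non-constant} discs $f_{x,w}$, one for each $w\in W$ and each $x$ off the line $\C w$: the disc lies in the affine line through $x$ and $w$, its boundary is a circle of radius $r<d(w,W^c)$ centred at $w$ multiplied by a scalar factor that is controlled on $\T$, and the cone property of $W$ absorbs that scalar factor. Centre-homotopy then comes for free from a path $s\mapsto\gamma(s)$ joining $w$ to $w'$ inside $W\setminus\C x$ (connected since a complex line has real codimension two and $m\ge 2$), because all discs in the family have the same form. The price is that no disc in $\mathcal B$ realizes $H_\phi=\phi(x)$ exactly; instead one lets $w\to x$ and uses the upper semicontinuity of $\phi$, logarithmic homogeneity, and the mean value of $\log|1+\tfrac{\|x-w\|}{r}t|$ over $\T$ to get $E_{\mathcal B}H_\phi(x)\le\phi(x)+\epsilon$ for every $\epsilon>0$. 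Your observation that ``upper semicontinuity plays no role'' in your computation is the tell-tale sign of the divergence: in the paper's proof it is essential, and it is what lets the author trade the hard homotopy for an easy limit. Your radial computation $H_\phi(a(\cdot)x)=\phi(x)$ is correct and is the same mechanism the paper uses to handle the scalar factor, but as it stands the proposal does not establish that $\mathcal B$ is a $W$-disc structure.
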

\begin{proof}
For each $w \in W$ let $U_w  = \C^m \setminus \{\lambda w ; \lambda \in \C\}$  
and define the analytic discs $\beta_w(x)$ by
\begin{align*}
  f_{x,w}(t) = \beta_w(x)(t) 
  &= \left(\frac{\|x-w\|}r - \frac{r}{\|x-w\|}\right)t w
    + \left(1+ \frac{r}{\|x-w\|}t\right) x
\end{align*}
where 
$$
  r = \min\left\{ \frac{\|x-w\|}{1+\|x-w\|}, \frac{d(w, W^c)}2 \right\},
$$ 
and $W^c$ is the complement of $W$ in $\C^m \setminus \{0\}$.

It is more convenient to write the formula for these discs in the following way
$$
  f_{x,w}(t) 
  = \left\{ \begin{array}{ll}
    \left(1 + \frac{\|x-w\|}r t\right) 
    \underbrace{
    \left[ w + \left( \frac{\|x-w\| + rt}{r + \|x-w\|t}\right)
    \frac r{\|x-w\|} (x-w) \right]
    }_{(\star)}
    & \text{if } t \neq -\frac r{\|x-w\|} \\
    %\in \D_{asdf} \setminus \{ -\frac r{\|x-w\|} \}\\
    \left(1- \frac{r^2}{\|x-w\|^2}\right)(x-w) 
    & \text{if } t = -\frac r{\|x-w\|}
    \end{array}\right.
$$
Then we see that $0$ is mapped to $x$. Furthermore, the factor in the 
brackets $(\star)$ maps the closed unit disc into the 
complex line through $x$ and $w$, and maps the unit 
circle into a circle with centre
$w$ and radius $r$. This can be seen from the fact that if $t\in \T$ then
$| \frac{\|x-w\| + rt}{r + \|x-w\|t} |=1$ and 
$$
    \| w - (\star)\| = r < d(w,W^c).
$$
This implies that for $t\in \T$ the value 
$f_{x,w}(t) = (1+\frac{\|x-w\|}r t)(\star)$ 
is also in $W$ since $W$ is a complex cone. 

Note also that $0$ is not in the image of $f_{x,w}$ because by the definition 
of $U_w$ the complex line through $x$ and $w$ does not include $0$.

To show that this is a $W$-disc structure we need to show that every two discs
with the same centre are centre-homotopic, that is $f_{x,w}$ and $f_{x,w'}$ are 
centre-homotopic for every $w, w' \in W$. Since $W$ is connected
the set $W \setminus \{ \lambda x ; \lambda \in \C \}$ is also connected and 
path connected. Therefore 
there is a path $\gamma : [0,1] \to W 
\setminus \{ \lambda x ; \lambda \in \C \}$ such that 
$\gamma(0) =w$ and $\gamma(1) = w'$.
Define the map $f: \overline \D \times [0,1] \to \C^{n+1} \setminus \{0\}$ by
$$
  f(t,s) = f_{x,\gamma(s)}.
$$
The function $f$ clearly satisfies all the conditions in Definition 
\ref{def:centre-homo}, which means
that we have defined a $W$-disc structure 
$\mathcal B = \cup_{w\in W} \{ f_{x,w} ; x \in U_w \}$.

We now show that $E_{\mathcal B} H_\phi \leq \phi$
on $W$. Fix $x \in W$ and $\epsilon > 0$. Since $\phi$ is 
upper semicontinuous there is an open neighbourhood $U$ of $x$ such that
$\phi|_U \leq \phi(x) + \epsilon/2$. Then select $w$ close enough to 
$x$ so that
\begin{itemize}
 \item $\frac 1{2\pi}\log(1 +\|x-w\|) < \epsilon/2$,
 \item $r = \min\left\{ \frac{\|x-w\|}{1+\|x-w\|}, \frac{d(w, W^c)}2 \right\}$
    is equal to $\frac{\|x-w\|}{1+\|x-w\|}$,
 \item the disc on the complex line through $x$ and $w$ with
    centre $w$ and radius $r$ (defined as above) is in $U$.
\end{itemize}
Then by using the properties of $\phi$, the properties of the term $(\star)$, 
and the Riesz representation formula and we see that
\begin{align*}
  E_{\mathcal B} H_\phi(x) 
  &\leq \, \int_\T \phi \circ f_{x,w}\, d\sigma  \\
  &\leq \, \int_\T \phi \left( \left(1 + \frac{\|x-w\|}r t\right)(\star)\right)\, d\sigma  \\
  &\leq \, \int_\T \phi ((\star))\, d\sigma  + \int_\T \log\left|1 + \frac{\|x-w\|}r t\right|\, d\sigma\\
  &\leq \,  \sup_{U} \, \phi  - \frac 1{2\pi}\log\left|-\frac{r}{\|x-w\|}\right| \\
  &\leq \, \phi(x) + \frac \epsilon 2 + \log( 1+\|x-w\|) \leq \, \phi(x) + \epsilon.
\end{align*}
This holds for every $\epsilon > 0$, hence $E_{\mathcal B} H_\phi \leq \phi$.

\end{proof}
It should be noted here that this disc structure defined here 
is under heavy influence from
the set of ``good discs'' used in both \cite{LarSig:2005} and
\cite{MagSig:2007} for the original proof of (\ref{eq:S-Z}).

% For completeness' sake we also include the following theorem giving another
% sufficient condition for which $\sup \F_\phi = E_W H_\phi$ holds.
% It is however not needed for our work here. 
% 
% \begin{theorem}[\cite[Theorem 4]{LarPol:2012}
% Let $W \subset X$ be domains in $\C^n$ and assume $\A_X^W$ has a connected component 
% $\mathcal B$ such that
% \begin{itemize}
%   \item $\beta$ covers $X$, and
%   \item If two discs in $\beta$ have the same centre then they are centre-homotopic.
% \end{itemize}
% Then, for every upper semicontinuous function $\phi: W \to \R \cup \{-\infty\}$,
% $$
%   \sup \F_\phi = E_W H_\phi.
% $$
% \end{theorem}

%%%%%%%%%%%%%%%%%%%%%%%%%%%%%%%%%%%%%%%%%%%%%%%%%%%%%%%%%%%%%%%%%%%%%%%%%%%%%
%%%%%%%%%%%%%%%%%%%%%%%%%%%%%%%%%%%%%%%%%%%%%%%%%%%%%%%%%%%%%%%%%%%%%%%%%%%%%%
\section{Disc formula for the global relative extremal function in
 $(\P^{\lowercase{n}},\omega)$}\label{sec:omega}

We let $\omega$ be the Fubini-Study K\"ahler form 
for $\P^n$. Recall that an upper semicontinuous function $u$ on $\P^n$
is called $\omega$-plurisubharmonic (or quasiplurisubharmonic) if
$dd^c u + \omega \geq 0$. We denote the family of $\omega$-plurisubharmonic
function on $\P^n$ by $\PSH(\P^n,\omega)$. 

If $f \in \A_{\P^n}$ then there is a well defined pullback
of $\omega$ by $f$, denoted $f^*\omega$. It is defined locally by
$\Delta \psi\circ f$ where $\psi$ is a local potential of $\omega$,
i.e.~$\psi$ is a plurisubharmonic function such that $dd^c \psi = \omega$.
For a more details
about $\omega$-plurisubharmonic functions and analytic discs see
\cite[\S 2]{Mag:2012}.

The pullback of the current $\omega$ to $\C^{n+1} \setminus 0$ satisfies
$$
	\pi^*\omega = dd^c \log\| \cdot \|,
$$
where $\pi: \C^{n+1}\setminus \{0\} \to \P^n$ is the projection
$$
    \pi(z_0,z_1,\ldots,z_n) = [z_0:z_1:\cdots:z_n].
$$

This implies that if $\tilde f \in \A_{\C^{n+1}\setminus \{0\}}$ and we let
$f = \pi \circ \tilde f \in \A_{\P^n}$ then
$$ 
	f^*\omega = \Delta \log\|\tilde f\|.
$$

\begin{proposition}\label{prop:P2Cn+1}
 There is a one to one correspondence between the class $\PSH(\P^n,\omega)$
 and $\{ u\in \PSH(\C^{n+1}\setminus\{0\}) ; 
 u \text{ logarithmically homogeneous} \}$.
\end{proposition}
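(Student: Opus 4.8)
The plan is to write down the correspondence explicitly and to verify the two defining properties (plurisubharmonicity and logarithmic homogeneity) by transferring the relevant currents across $\pi$, using the identity $\pi^*\omega = dd^c\log\|\cdot\|$ established just above together with the fact that $\pi$ is a holomorphic submersion.

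\emph{Forward map.} To $u \in \PSH(\P^n,\omega)$ I would associate $v = u\circ\pi + \log\|\cdot\|$ on $\C^{n+1}\setminus\{0\}$. Since $\pi(\lambda z) = \pi(z)$ for $\lambda \neq 0$, a direct computation gives $v(\lambda z) = u(\pi(z)) + \log\|z\| + \log|\lambda| = v(z) + \log|\lambda|$, so $v$ is logarithmically homogeneous. For plurisubharmonicity I would pull back: as $\pi$ is a submersion, $dd^c(u\circ\pi) = \pi^*(dd^c u)$, whence $dd^c v = \pi^*(dd^c u) + \pi^*\omega = \pi^*(dd^c u + \omega) \geq 0$; and $v$ is upper semicontinuous as the sum of an upper semicontinuous and a smooth function. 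Thus $v$ lies in the target class.

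\emph{Inverse map.} Conversely, to a logarithmically homogeneous $v \in \PSH(\C^{n+1}\setminus\{0\})$ I would associate the function $u$ on $\P^n$ defined by $u(\pi(z)) = v(z) - \log\|z\|$. The point to check is that $v - \log\|\cdot\|$ is constant on the fibres of $\pi$: for $\lambda \neq 0$ one has $(v - \log\|\cdot\|)(\lambda z) = v(z) + \log|\lambda| - \log\|z\| - \log|\lambda| = (v - \log\|\cdot\|)(z)$, so $u$ is well defined. To see that $u$ is $\omega$-plurisubharmonic I would pull back again, obtaining $\pi^*(dd^c u + \omega) = dd^c(v - \log\|\cdot\|) + dd^c\log\|\cdot\| = dd^c v \geq 0$; since $\pi$ is a surjective submersion this forces $dd^c u + \omega \geq 0$ on $\P^n$, and upper semicontinuity of $u$ follows by composing $v - \log\|\cdot\|$ with a local holomorphic section of $\pi$. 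Finally, the two assignments are mutually inverse directly from the formulas, since adding and then subtracting $\log\|\cdot\|$ (and conversely) returns the original function.

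\emph{Main obstacle.} The substantive step is the equivalence ``$dd^c u + \omega \geq 0$ on $\P^n$ if and only if its pullback is $\geq 0$ on $\C^{n+1}\setminus\{0\}$''. This rests on $\pi$ being a holomorphic submersion, which is what allows one both to commute $dd^c$ with $\pi^*$ and to descend positivity of the pulled-back $(1,1)$-current back down to $\P^n$ through local sections. The homogeneity identities are routine; it is this current-theoretic transfer across $\pi$ that does the real work, and everything else is bookkeeping confirming that the two maps invert one another.
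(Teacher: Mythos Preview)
Your proof is correct and follows essentially the same route as the paper: both directions are given by $u\mapsto u\circ\pi+\log\|\cdot\|$ and its inverse, with positivity transferred via $\pi^*(dd^c u+\omega)=dd^c(u\circ\pi)+dd^c\log\|\cdot\|$. You simply supply more detail (homogeneity of the forward map, upper semicontinuity, mutual inversion, and the role of $\pi$ as a submersion) than the paper's terse argument.
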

\begin{proof}
 If $v\in \PSH(\P^n,\omega)$ then $u = v \circ \pi + \log\|\cdot\|$ is 
 pluri\-sub\-harmonic on $\C^{n+1}\setminus\{0\}$ since
 $$
	dd^c(v \circ \pi + \log\|\cdot\|) = \pi^*(dd^c v + \omega) \geq 0.
 $$
 Conversely, if $u$ is in the later class, then for 
 $z\in \C^{n+1}\setminus\{0\}$, 
 $v([z]) = u(z) - \log\|z\|$ is a well defined function on $\P^n$ since
 $$
	u(\lambda z) - \log\|\lambda z\| = u(z) + \log|\lambda| - \log\|\lambda z\|
 = u(z) - \log\|z\|, \quad \lambda \in \C^*.
 $$
 Furthermore, $v$ is $\omega$-plurisubharmonic
 by the same calculations as above.
\end{proof}

\begin{lemma}\label{eq:supFloghom}
 Assume $X$ is a domain and a complex cone in $\C^{n+1}\setminus \{0\}$.
 If $\phi :X \to \R \cup \{-\infty\}$ is upper semicontinuous and
 logarithmically homogeneous then the function
 $\sup \F_{\phi}$,
 $$
	\sup \F_{\phi}(x) = \sup\{ u(x) ; u \in \PSH(\C^{n+1}\setminus \{0\}, 
	u|_X \leq \phi \}
 $$
	is also logarithmically homogeneous.
\end{lemma}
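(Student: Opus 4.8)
The plan is to exhibit, for each nonzero $\lambda \in \C^*$, a bijection of the admissible family $\F_\phi = \{ u \in \PSH(\C^{n+1}\setminus\{0\}) ; u|_X \leq \phi \}$ with itself that shifts all values by exactly $\log|\lambda|$; the logarithmic homogeneity of $\sup\F_\phi$ will then fall out of invariance of the supremum. First I would fix $\lambda \in \C^*$ and, for $u \in \F_\phi$, define the candidate $u_\lambda(x) = u(\lambda x) - \log|\lambda|$. Since multiplication by $\lambda$ is a biholomorphism of $\C^{n+1}\setminus\{0\}$ and subtracting a constant preserves plurisubharmonicity, $u_\lambda \in \PSH(\C^{n+1}\setminus\{0\})$.

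Next I would verify that $u_\lambda$ lies again in $\F_\phi$, which is the step where both hypotheses on $X$ and $\phi$ are used together. Because $X$ is a complex cone, $x \in X$ forces $\lambda x \in X$, so $u(\lambda x) \leq \phi(\lambda x)$; and because $\phi$ is logarithmically homogeneous, $\phi(\lambda x) = \phi(x) + \log|\lambda|$. Combining these,
$$
  u_\lambda(x) = u(\lambda x) - \log|\lambda| \leq \phi(\lambda x) - \log|\lambda| = \phi(x)
$$
for every $x \in X$, so $u_\lambda|_X \leq \phi$ and $u_\lambda \in \F_\phi$. The analogous construction with $1/\lambda$ in place of $\lambda$ provides a two-sided inverse, so $u \mapsto u_\lambda$ is a bijection of $\F_\phi$ onto itself.

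Finally I would pass to the supremum. For a fixed $x$ and every $u \in \F_\phi$ we have the identity $u(\lambda x) = u_\lambda(x) + \log|\lambda|$, and letting $u$ range over $\F_\phi$ is the same as letting $u_\lambda$ range over $\F_\phi$ by the bijection just established. Hence
$$
  \sup\F_\phi(\lambda x) = \sup\{ u_\lambda(x) + \log|\lambda| ; u \in \F_\phi \} = \sup\F_\phi(x) + \log|\lambda|,
$$
which is precisely the asserted logarithmic homogeneity. The argument is essentially formal; the only point demanding genuine care is the membership check $u_\lambda \in \F_\phi$, since it is there that the cone structure of $X$ and the log-homogeneity of $\phi$ must be invoked simultaneously. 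No separate treatment of $\lambda = 0$ is needed, as $0 \notin \C^{n+1}\setminus\{0\}$ and the convention $\log 0 = -\infty$ is consistent with the automatic extension of a logarithmically homogeneous function by $-\infty$ at the origin.
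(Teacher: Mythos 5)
Your proof is correct and follows essentially the same route as the paper: both exploit that the map $u \mapsto u(\lambda\,\cdot) - \log|\lambda|$ preserves the class $\F_\phi$ (using the cone property of $X$ and the log-homogeneity of $\phi$), and then conclude by the symmetric substitution with $\lambda^{-1}$. The only organizational difference is that you apply the transformation to each competitor $u$ and invoke bijectivity of the family, whereas the paper applies it directly to the envelope $\sup\F_\phi$ itself (which requires the additional observation that the envelope is plurisubharmonic); your variant quietly sidesteps that point.
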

\begin{proof}
 Since  function $\sup \F_\phi$ is dominated by $\phi$, then
 $$
	\sup \F_\phi(\lambda x) + \log|\lambda| \leq \phi(\lambda x) + \log|\lambda| 
	= \phi(x)
  $$
  and furthermore, since $\sup \F_\phi$ is plurisubharmonic, 
  the function $x \mapsto \sup \F_\phi(\lambda x)$ is also plurisubharmonic
  and therefore in $\F_\phi$.
  This implies, by the definition of $\sup \F_\phi$, that 
  $$
		\sup \F_\phi(\lambda x) + \log|\lambda| \leq \sup \F_\phi(x), \qquad 
		\lambda \in \C^*.
	$$
	By setting $\lambda^{-1}$ instead of $\lambda$ and $\lambda x$ instead of
	$x$ we get the inverted inequality, hence 
	$\sup \F_\phi(\lambda x) + \log|\lambda| = \sup \F_\phi(x)$.
\end{proof}

Now we prove the main result.

\begin{theorem}\label{th:omega}
Let $\omega$ by the Fubini-Study K\"ahler form. If $W\subset \P^n$ is a domain and $\phi:W \to \R \cup \{-\infty\}$ 
is an upper semicontinuous function then 
\begin{multline}\label{eq:omega}
  \sup\{ u(x) ; u \in \PSH(\P^n,\omega), u|_W \leq \phi \} = \\
  \inf \left\{ -\frac{1}{2\pi}\int_\D \log|\cdot|\, f^*\omega +
  \int_\T \phi \circ f\, d\sigma\ ;\ f \in \A_{\P^n}^W, f(0) = x \right\}.
\end{multline}
or, using the notation from Section \ref{sec:intro},
$\sup \F_{\omega,\phi} = E_W H_{\omega,\phi}$.
\end{theorem}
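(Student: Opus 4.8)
The plan is to lift the entire problem from $\P^n$ to $\C^{n+1}\setminus\{0\}$ through the projection $\pi$, apply the affine results already in hand, and push the resulting identity back down. Set $\tilde W = \pi^{-1}(W)$ and $\tilde\phi = \phi\circ\pi + \log\|\cdot\|$ on $\tilde W$. Since the fibres of $\pi$ are connected and $W$ is a domain, $\tilde W$ is a domain; it is visibly a complex cone, and $\tilde\phi$ is upper semicontinuous and logarithmically homogeneous. First I would invoke Lemma \ref{lem:n+1_disc_structure} (with $m=n+1$) to produce a $\tilde W$-disc structure $\mathcal B$ on $\C^{n+1}\setminus\{0\}$ with $E_{\mathcal B}H_{\tilde\phi}\leq\tilde\phi$, and then Corollary \ref{cor:main} to obtain the affine disc formula $\sup\F_{\tilde\phi} = E_{\tilde W}H_{\tilde\phi}$ on $\C^{n+1}\setminus\{0\}$, where $H_{\tilde\phi}$ is the ordinary Poisson functional. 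It then remains to identify both sides of this equality, evaluated at a point $z$ over $x=\pi(z)$, with the two sides of (\ref{eq:omega}) shifted by $\log\|z\|$.

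For the left-hand side I would show $\sup\F_{\tilde\phi}(z) = \sup\F_{\omega,\phi}(\pi(z)) + \log\|z\|$. The inequality ``$\geq$'' is immediate: by Proposition \ref{prop:P2Cn+1} every $v\in\F_{\omega,\phi}$ gives $u = v\circ\pi+\log\|\cdot\|\in\F_{\tilde\phi}$. For ``$\leq$'' the decisive point is that $\sup\F_{\tilde\phi}$ is itself logarithmically homogeneous by Lemma \ref{eq:supFloghom}, so by Proposition \ref{prop:P2Cn+1} it descends to a function $v_0\in\PSH(\P^n,\omega)$ with $v_0\circ\pi = \sup\F_{\tilde\phi} - \log\|\cdot\|$; the domination $\sup\F_{\tilde\phi}\leq\tilde\phi$ on $\tilde W$ forces $v_0\leq\phi$ on $W$, whence $v_0\in\F_{\omega,\phi}$ and the bound follows.

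For the right-hand side the bridge is the computation $H_{\tilde\phi}(\tilde f) = H_{\omega,\phi}(\pi\circ\tilde f) + \log\|\tilde f(0)\|$ for $\tilde f\in\A^{\tilde W}_{\C^{n+1}\setminus\{0\}}$: writing $f=\pi\circ\tilde f$ and using $f^*\omega=\Delta\log\|\tilde f\|$ together with the Riesz representation of the subharmonic function $\log\|\tilde f\|$ on $\D$ turns $\int_\T\log\|\tilde f\|\,d\sigma$ into $\log\|\tilde f(0)\| - \frac{1}{2\pi}\int_\D\log|\cdot|\,f^*\omega$. Taking infima, projection $\tilde f\mapsto\pi\circ\tilde f$ at once yields $E_{\tilde W}H_{\tilde\phi}(z)\geq E_WH_{\omega,\phi}(\pi(z))+\log\|z\|$. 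The reverse inequality needs lifting: given $f\in\A^W_{\P^n}$ with $f(0)=\pi(z)$, I must produce $\tilde f\in\A^{\tilde W}_{\C^{n+1}\setminus\{0\}}$ with $\pi\circ\tilde f=f$ and $\tilde f(0)=z$.

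I expect this lifting step to be the main obstacle. A holomorphic lift over $\D$ exists because $\pi$ is a holomorphic $\C^*$-bundle and every holomorphic line bundle over the disc is trivial; the delicate part is continuity up to $\T$ (which then gives $\tilde f(\T)\subset\tilde W$ for free). I would obtain it by taking a continuous lift over the contractible set $\overline\D$, correcting it to be holomorphic on $\D$ by solving a $\dd$-equation, controlling the boundary behaviour, and fixing the value at $0$ by a constant rescaling; the Remark in Section \ref{sec:intro} that $H_{\omega,\phi}(f(r\cdot))\to H_{\omega,\phi}(f)$ as $r\to1^-$ lets me reduce, if necessary, to discs holomorphic across $\T$, for which the lift is routine. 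Once the lift is available, combining the two identities and cancelling $\log\|z\|$ gives $\sup\F_{\omega,\phi}\circ\pi = E_WH_{\omega,\phi}\circ\pi$, and surjectivity of $\pi$ completes the proof.
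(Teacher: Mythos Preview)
Your proposal is correct and follows essentially the same route as the paper: lift to $\C^{n+1}\setminus\{0\}$ via $\tilde W=\pi^{-1}(W)$ and $\tilde\phi=\phi\circ\pi+\log\|\cdot\|$, invoke Lemma~\ref{lem:n+1_disc_structure} and Corollary~\ref{cor:main} to get $\sup\F_{\tilde\phi}=E_{\tilde W}H_{\tilde\phi}$, use the Riesz representation of $\log\|\tilde f\|$ to obtain $H_{\tilde\phi}(\tilde f)=H_{\omega,\phi}(\pi\circ\tilde f)+\log\|\tilde f(0)\|$, and then descend via Proposition~\ref{prop:P2Cn+1} and Lemma~\ref{eq:supFloghom}. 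The only difference is emphasis: the paper dispatches the lifting of a disc $f\in\A_{\P^n}^W$ to $\tilde f\in\A_{\C^{n+1}\setminus\{0\}}^{\tilde W}$ in a single sentence, whereas you (rightly) flag continuity up to $\T$ as the nontrivial point and propose handling it either directly or by the $r\to1^-$ reduction from the Remark in Section~\ref{sec:intro}.
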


\begin{proof}
Define the complex cone
$\tilde W = \pi^{-1}(W)$ and define the logarithmically homogeneous function
$\tilde \phi: \tilde W \to \R \cup \{-\infty\}$ by
$$
  \tilde \phi(z)  
  = \phi ([z_0:z_1:\cdots:z_n]) + \log \|z\|.
$$

Fix $\tilde f \in \A_{\C^{n+1} \setminus \{0\}}^{\tilde W}$ and let 
$f = \pi \circ \tilde f$, $f \in \A_{\P^n}^W$.
Let $z = \tilde f(0)$ which implies
$\pi(z) = f(0)$. 
Then 
$$
  \tilde \phi \circ \tilde f 
  = \phi \circ f  + \log\|f\|
$$
By the Riesz representation formula for the function $\log\|\tilde f\|$
at the point $0$,
\begin{align*}
  \log\|\tilde f(0)\| 
  &= \frac{1}{2\pi}\int_\D \log|\cdot|\, \Delta \log\|\tilde f\| 
		+ \int_\T \log\|\tilde f\|\, d\sigma \\
  &= \frac{1}{2\pi}\int_\D \log|\cdot|\, \tilde f^*(\pi^*\omega) 
		+ \int_\T \log\|\tilde f\|\, d\sigma
\end{align*}
Since $\tilde f^*(\pi^*\omega) = (\pi\circ \tilde f)^*\omega = f^*\omega$, 
this shows that
 $$
  - \int_\T \log\|\tilde f\|\, d\sigma  -\frac{1}{2\pi}\int_\D \log|\cdot|\, f^*\omega 
  = -\log\|\tilde f(0)\| = - \log\|z\|.
 $$
Using the three previous equalities we derive that 
\begin{align*}
	H_{\omega,\phi}(f) 
	&= \int_\T \phi \circ f\, d\sigma - \frac{1}{2\pi}\int_\D \log|\cdot|\, f^*\omega\\
	&= \int_\T \tilde \phi \circ \tilde f\, d\sigma - \int_\T \log\|\tilde f\|\, d\sigma - \frac{1}{2\pi}\int_\D \log|\cdot|\, f^*\omega\\
	&= \int_\T \tilde \phi \circ \tilde f\, d\sigma - \log\|z\|\\
	&= H_{\tilde \phi}(\tilde f) - \log\|z\|.
\end{align*}
That is
\begin{equation}\label{eq:H}
	H_{\omega,\phi}(f) = H_{\tilde \phi}(\tilde f) - \log\|z\|.
\end{equation}

Now note that every disc $\tilde f \in \A_{\C^{n+1} \setminus \{0\}}^{\tilde W}$
gives a disc $f = \pi \circ \tilde f \in \A_{\P^{n}}^W$, and conversely for every disc
$f = \pi \circ \tilde f \in \A_{\P^{n}}^W$ there is a disc
$\tilde f \in \A_{\C^{n+1} \setminus \{0\}}^{\tilde W}$
such that $f = \pi \circ \tilde f$.

Hence, by taking the infimum over $f$ on the left hand side of (\ref{eq:H}) corresponds
to taking infimum over $\tilde f$ on the right hand side. This shows that 
\begin{equation}\label{eq:EH}
	E_W H_{\omega,\phi}(\pi(z)) = E_{\tilde W} H_{\tilde \phi}(z) + \log\|z\|.
\end{equation}

By Lemma \ref{lem:n+1_disc_structure},
$\tilde W$ admits a  $\tilde W$-disc
structure $\beta$ such that $E_{\mathcal B} H_{\tilde \phi} \leq \tilde \phi$,
and by Corollary \ref{cor:main} 
we have
\begin{equation}\label{eq:tilde_phi2}
  \sup \F_{\tilde \phi} = E_{\tilde W} H_{\tilde \phi}
\end{equation}
in $\C^{n+1}\setminus \{0\}$.

Finally, by (\ref{eq:EH}), (\ref{eq:tilde_phi2}), and Lemma \ref{eq:supFloghom},
we can show that $E_W H_{\omega,\phi} = \sup \F_{\omega,\phi}$:
$$
 E_W H_{\omega,\phi}(\pi(z)) 
 = E_{\tilde W} H_{\tilde \phi}(z) + \log\|z\| 
 = \sup \F_{\tilde \phi}(z) + \log\|z\| 
 = \sup \F_{\omega,\phi}(\pi(z)).
$$
\end{proof}

%%%%%%%%%%%%%%%%%%%%%%%%%%%%%%%%%%%%%%%%%%%%%%%%%%%%%%%%%%%%%%%%%%%%%%%%%%%%%%
%%%%%%%%%%%%%%%%%%%%%%%%%%%%%%%%%%%%%%%%%%%%%%%%%%%%%%%%%%%%%%%%%%%%%%%%%%%%%%
\section{Applications to projective hulls}\label{sec:projective_hulls}

The case when $\phi = 0$ in Theorem \ref{th:omega} 
is interesting in its own way since
it gives the global extremal function for the set $W$, 
\cite[\S 5 and 6]{GueZer:2005}.

\begin{definition}
 Let $E$ be a Borel subset in $\P^n$. The \emph{global extremal function}
 for $E$ is defined as
 $$
	\Lambda_E(x) = \sup\{ u(x) ; u \in \PSH(\P^n,\omega), u|_E \leq 0\}.
 $$
\end{definition}
In this case the when $E$ is a domain Theorem \ref{th:omega} gives
the following formula
$$
	\Lambda_E(x) = \inf\{ -\frac{1}{2\pi}\int_\D \log|\cdot|\, f^*\omega ; 
	f\in \A_{\P^n}^E, f(0) = x \}.
$$

\begin{definition}
 Let $K$ be a compact subset of $\P^n$. The \emph{projective hull} of
 $K$, denoted $\hat K$ is defined as all the points $x\in \P^n$
 for which there exists a constant $C_x$ such that
 \begin{equation}\label{eq:def_hull}
	 \| P(x)\| \leq C_x^d \sup_K \| P\|, \quad\text{ for all }
	 P \in H^0(\P^n, \O(d)).
 \end{equation}
 
\end{definition}

Just as the polynomial hull can be characterized by the Siciak-Zahariuta
extremal function, the projective 
hull can be characterized using the global extremal function.
\begin{proposition}{\cite[\S 4]{HarLaw:2006}}\label{prop:hatK_Lambda}
If $K\subset \P^n$ is compact then
 $$
	\hat K = \{ x\in \P^n ; \Lambda_K(x) < +\infty \}.
 $$
 
 Furthermore, or each $x$ the value $\exp(\Lambda_K(x))$ is equal to 
 the infimum of all $C_x$ such that (\ref{eq:def_hull}) holds.
\end{proposition}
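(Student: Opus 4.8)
The plan is to reformulate the algebraic definition of $\hat K$ in terms of a ``polynomial'' extremal function and then to identify that function with $\Lambda_K$ by a Siciak--Zahariuta type argument. For a section $P \in H^0(\P^n,\O(d))$, i.e.\ a homogeneous polynomial of degree $d$ on $\C^{n+1}$, the metric on $\O(d)$ induced by $\omega$ gives $\|P(x)\| = |P(z)|/\|z\|^d$ for any representative $z$ of $x$. Set
$$
  L_K(x) = \sup\left\{ \tfrac1d \log\|P(x)\| \ ;\ d\geq 1,\ P \in H^0(\P^n,\O(d)),\ \sup_K\|P\| \leq 1 \right\}.
$$
After normalizing $\sup_K\|P\|\leq 1$, the bound $\|P(x)\| \leq C_x^d\sup_K\|P\|$ holding for every $d$ and every $P$ is precisely the statement $L_K(x) \leq \log C_x$. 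Hence $x \in \hat K$ if and only if $L_K(x) < +\infty$, and in that case the infimum of the admissible constants $C_x$ equals $\exp(L_K(x))$. The proposition therefore reduces to the pointwise identity $\Lambda_K = L_K$.

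First I would prove the easy inequality $L_K \leq \Lambda_K$. Fix $P$ homogeneous of degree $d$ with $\sup_K\|P\|\leq 1$. Its lift $\tfrac1d\log|P|$ is plurisubharmonic and logarithmically homogeneous on $\C^{n+1}\setminus\{0\}$, so by Proposition \ref{prop:P2Cn+1} the function $v_P = \tfrac1d\log\|P(\cdot)\|$ lies in $\PSH(\P^n,\omega)$; moreover $v_P \leq 0$ on $K$ since $\|P(x)\| \leq \sup_K\|P\| \leq 1$ there. Thus $v_P$ competes in the supremum defining $\Lambda_K$, giving $v_P \leq \Lambda_K$, and taking the supremum over all such $P$ yields $L_K \leq \Lambda_K$. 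In particular $\Lambda_K(x)<+\infty$ forces $L_K(x)<+\infty$, i.e.\ $\{\Lambda_K<+\infty\}\subseteq \hat K$, with best constant at most $\exp(\Lambda_K(x))$.

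The reverse inequality $\Lambda_K \leq L_K$ is the substance of the proposition. Given $u \in \PSH(\P^n,\omega)$ with $u|_K \leq 0$, I would lift it via Proposition \ref{prop:P2Cn+1} to $\tilde u = u\circ\pi + \log\|\cdot\|$, a plurisubharmonic logarithmically homogeneous function on $\C^{n+1}\setminus\{0\}$ which extends across $0$ with logarithmic growth, hence belongs to the Lelong class on $\C^{n+1}$. The hypothesis $u|_K\leq 0$ becomes $\tilde u(z) \leq \log\|z\|$ on the cone $\pi^{-1}(K)$, equivalently $\tilde u \leq 0$ on the compact slice $F = \pi^{-1}(K)\cap\{\|z\|=1\}$, on which $\sup_K\|P\| = \sup_F|P|$. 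For a representative $z$ of $x$ with $\|z\|=1$ one has $u(x) = \tilde u(z)$, and the desired bound reduces to $\tilde u \leq \sup\{\tfrac1d\log|P| ; P \text{ homogeneous of degree } d,\ \sup_F|P|\leq 1\}$, that is, a \emph{homogeneous} Siciak--Zahariuta identity for the Lelong class.

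This last step is the main obstacle, as it requires approximating the arbitrary logarithmically homogeneous $\tilde u$ from below by normalized logarithms of homogeneous polynomials; it is exactly the projective Siciak--Zahariuta theorem of Guedj and Zeriahi \cite{GueZer:2005}. I expect to obtain it either by invoking the classical Siciak--Zahariuta theorem in $\C^{n+1}$ and exploiting the logarithmic homogeneity of $\tilde u$ to pass to top-degree homogeneous parts, or, more robustly, by a direct Hörmander $L^2$ (Ohsawa--Takegoshi) construction: for each $d$ one solves a $\dd$-problem with weight $2d\,\tilde u$ to produce a homogeneous section $P_d$ with $\tfrac1d\log|P_d(z)| \geq \tilde u(z) - o(1)$ while the subaverage inequality controls $\sup_F|P_d|$. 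Letting $d\to\infty$ and descending to $\P^n$ gives $\Lambda_K \leq L_K$. Combined with the easy inequality this yields $\Lambda_K = L_K$ pointwise, which simultaneously proves $\hat K = \{\Lambda_K<+\infty\}$ and, via the first paragraph, that $\exp(\Lambda_K(x))$ is the best constant in (\ref{eq:def_hull}).
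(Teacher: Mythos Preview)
The paper does not prove Proposition~\ref{prop:hatK_Lambda}; it is quoted from \cite[\S 4]{HarLaw:2006} and stated without argument, so there is no in-paper proof to compare your proposal against.

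That said, your outline is the standard route to this result and is essentially correct. The reduction to the pointwise identity $\Lambda_K = L_K$ via the best-constant reformulation is clean, and the easy inequality $L_K \leq \Lambda_K$ is exactly as you describe, using Proposition~\ref{prop:P2Cn+1} to recognize $\tfrac1d\log\|P(\cdot)\|$ as an $\omega$-psh competitor. For the hard inequality $\Lambda_K \leq L_K$, your lift $\tilde u = u\circ\pi + \log\|\cdot\|$ does land in the Lelong class on $\C^{n+1}$ (it is bounded above by $\log\|z\| + \sup_{\P^n} u$, hence extends across $0$), and the identification $\sup_K\|P\| = \sup_F |P|$ on the spherical slice $F$ is correct. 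The genuine content, as you flag, is the projective Siciak--Zahariuta theorem. One caution: the first option you mention, invoking the affine Siciak theorem on $\C^{n+1}$ and ``passing to top-degree homogeneous parts,'' does not work naively, since the top-degree part of a polynomial need not dominate the full polynomial pointwise. The robust route is your second one (Demailly/Guedj--Zeriahi Bergman-kernel or $L^2$ approximation with weight $e^{-2d\tilde u}$), which is precisely how \cite{GueZer:2005} establish the analogue of Siciak's theorem in this setting and how \cite{HarLaw:2006} obtain the proposition.
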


For a constant $\Lambda \geq 0$ we let 
$$
	\hat K(\Lambda) = \{x \in \P^n ; \Lambda_K(x) \leq \Lambda\}.
$$
The projective hull can then be written as a union of the 
sets $\hat K(\Lambda)$.

The disc formula proved in Section \ref{sec:omega} 
is only for domains in $\P^n$ but not
compact sets. This forces us to take a sequence of open neighbourhoods of $K$,
and the following proposition allows us to take a limit to obtain
$\Lambda_K$.
\begin{proposition}\label{prop:Lambda_lim}
Assume $K\subset \P^n$ is a compact set and $(U_j)_j$ is a decreasing
sequence of open subsets in $\P^n$ such that $\cap_j U_j = K$. 
Then 
$$ 
	\Lambda_K = \lim_{j\to \infty} \Lambda_{U_j}.
$$
\end{proposition}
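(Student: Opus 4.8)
The plan is to prove the identity by establishing the two inequalities separately: the inequality $\lim_j \Lambda_{U_j}\le \Lambda_K$ is soft and follows from monotonicity, while the reverse inequality is the substantive one and is obtained by a competitor argument based on upper semicontinuity. First I would record the monotonicity. Since $K\subset U_{j+1}\subset U_j$, any $u\in\PSH(\P^n,\omega)$ with $u\le 0$ on $U_j$ also satisfies $u\le 0$ on $U_{j+1}$ and on $K$; hence the admissible classes grow with $j$ and $\Lambda_{U_j}\le \Lambda_{U_{j+1}}\le \Lambda_K$. Therefore the pointwise increasing limit $\Lambda:=\lim_j \Lambda_{U_j}=\sup_j \Lambda_{U_j}$ exists and satisfies $\Lambda\le \Lambda_K$. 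I would also note that $\Lambda_{U_j}=0$ on $U_j$, because the constant $0$ is $\omega$-plurisubharmonic and admissible while every competitor is $\le 0$ there; consequently $\Lambda=0$ on $K$.

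For the reverse inequality it suffices, by the definition $\Lambda_K(x)=\sup\{u(x);\,u\in\PSH(\P^n,\omega),\,u|_K\le 0\}$, to show that every such competitor $u$ satisfies $u\le \Lambda$ pointwise. Fix such a $u$ and $\epsilon>0$. By upper semicontinuity the set $\Omega_\epsilon:=\{u<\epsilon\}$ is open and contains $K$. The key point is that $U_j\subset \Omega_\epsilon$ for all large $j$; granting this, $u-\epsilon\le 0$ on $U_j$, so $u-\epsilon$ is admissible for $\Lambda_{U_j}$ and hence $u-\epsilon\le \Lambda_{U_j}\le \Lambda$. Letting $\epsilon\downarrow 0$ gives $u\le \Lambda$, and taking the supremum over all admissible $u$ yields $\Lambda_K\le \Lambda$, which combined with the first step gives equality.

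The main obstacle is precisely the exhaustion statement that every open neighbourhood of $K$ eventually contains some $U_j$, equivalently that $U_j\subset \Omega_\epsilon$ for $j$ large. This is a compactness fact that holds once $\bigcap_j \overline{U_j}=K$: if no $U_j$ were contained in the open set $\Omega_\epsilon$, then choosing $x_j\in U_j\setminus\Omega_\epsilon$ in the compact set $\P^n\setminus\Omega_\epsilon$ and passing to a convergent subsequence produces a point of $\bigcap_j\overline{U_j}\setminus K$, a contradiction. I would therefore secure $\bigcap_j\overline{U_j}=K$ in the present setting, either by reducing to a subsequence with $\overline{U_{j+1}}\subset U_j$ or by using the connectedness of the neighbourhoods used later to exclude the thin boundary components that could otherwise keep a limit point of every $\overline{U_j}$ away from $K$. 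As a safeguard against the measure-zero pathologies, I would run the comparison at the level of the upper semicontinuous regularizations $\Lambda_{U_j}^*$, which are genuinely $\omega$-plurisubharmonic: the increasing-limit theorem for $\omega$-plurisubharmonic functions shows that $\bigl(\sup_j \Lambda_{U_j}^*\bigr)^*$ is $\omega$-plurisubharmonic and coincides with $\Lambda$ off a pluripolar set, so the inequality $u\le\Lambda$ need only be verified quasi-everywhere before being upgraded to hold everywhere by upper semicontinuity of the $\omega$-plurisubharmonic functions involved.
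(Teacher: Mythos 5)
Your argument is essentially the paper's: monotonicity of the admissible classes gives $\lim_j\Lambda_{U_j}\le\Lambda_K$, and for the reverse inequality one fixes a competitor $u\in\PSH(\P^n,\omega)$ with $u|_K\le 0$, uses upper semicontinuity to find a neighbourhood $U=\{u<\epsilon\}$ of $K$, places some $U_{j_0}$ inside $U$, and concludes $u-\epsilon\le\Lambda_{U_{j_0}}$. Your concern about the exhaustion step is well taken: the paper simply writes ``find $U_{j_0}$ such that $U_{j_0}\subset U$'', which does not follow from $\bigcap_j U_j=K$ alone (a decreasing sequence of open sets can carry stray pieces whose intersection is empty), and does require something like $\bigcap_j\overline{U_j}=K$ or passing to a subsequence with $\overline{U_{j+1}}\subset U_j$, exactly as you propose; in the application the $U_j$ are chosen by the author, so this is harmless, but your flag is legitimate. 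The final safeguard via the regularizations $\Lambda_{U_j}^*$ is unnecessary, since the pointwise competitor argument already yields $u\le\lim_j\Lambda_{U_j}$ everywhere without any quasi-everywhere detour.
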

\begin{proof}
Note first that since $U_j$ is a decreasing sequence then the 
sequence $\Lambda_{U_j}$ is increasing, in particular 
$\lim_{j\to \infty} \Lambda_{U_j}$ exists.
%(although it might be $+\infty$).

Since each function $\Lambda_{U_j}$ is in $\PSH(\P^n,\omega)$ 
(see \cite[Theorem 5.2 and Proposition 5.6]{GueZer:2005}) and is 
$0$ on $U_j \supset K$, then $\Lambda_K \geq \Lambda_{U_j}$ and therefore
$\Lambda_K \geq \lim_{j\to \infty} \Lambda_{U_j}$.

Let $\epsilon >0$. Since each function 
$u \in \PSH(\P^n,\omega)$, $u|_K \leq 0$ is upper semicontinuous
there is a neighbourhood $U$ of $K$ such that 
$u|_U \leq \epsilon$. Find $U_{j_0}$ such that $U_{j_0} \subset U$, then
for $x \in X$,
$$
	u(x)-\epsilon \leq \Lambda_{U_{j_0}}(x) \leq \lim_{j\to \infty} \Lambda_{U_j},
$$
which implies, by taking supremum over $u$ and letting $\epsilon \to 0$, 
that $\Lambda_K \leq \lim_{j\to \infty} \Lambda_{U_j}$.
\end{proof}

By combining the disc formula for the global extremal function with 
Proposition \ref{prop:hatK_Lambda} we can get a new characterization of 
the projective hull for connected sets. 
The characterization is quantitative, that is it uses $\hat K(\Lambda)$,
just as the characterization by existence of currents 
\cite[Theorem 11.1]{HarLaw:2006}.

\begin{theorem}\label{th:hulls}
Let $\Lambda >0$
 For a point $x$ in a connected compact subset 
 $K\subset \P^n$ the following is equivalent
 \begin{itemize}
  \item[(A)] $x \in \hat K(\Lambda)$
  \item[(B)] For every $\epsilon >0$ and every neighbourhood $U$ of $K$ 
  there exists a disc
  $f \in \A_{\P^n}^U$ such that $f(0)=x$ and 
  $$
		- \frac{1}{2\pi}\int_\D \log|\cdot| f^*\omega < \Lambda + \epsilon.
	$$
 \end{itemize}
\end{theorem}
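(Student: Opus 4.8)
The plan is to read both (A) and (B) through the disc formula of Theorem~\ref{th:omega} specialised to $\phi=0$, which for a \emph{domain} $U\subset\P^n$ asserts $\Lambda_U=E_U H_{\omega,0}$, where $E_U$ denotes the envelope over $\A_{\P^n}^U$ and $H_{\omega,0}(f)=-\frac1{2\pi}\int_\D\log|\cdot|\,f^*\omega$ is exactly the quantity appearing in (B). In these terms (B) says precisely that $E_U H_{\omega,0}(x)\le\Lambda$ for every neighbourhood $U$ of $K$, while (A) says $\Lambda_K(x)\le\Lambda$. The bridge between the two is Proposition~\ref{prop:Lambda_lim}, which recovers $\Lambda_K$ as the increasing limit of the $\Lambda_U$'s. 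The one structural point to respect throughout is that Theorem~\ref{th:omega} applies only to domains, so I must always work with \emph{connected} neighbourhoods; this is where the hypothesis that $K$ is connected enters, and it is essentially the only place it is used.

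For (A)$\Rightarrow$(B), I would assume $\Lambda_K(x)\le\Lambda$ and fix an arbitrary neighbourhood $U$ of $K$ together with $\epsilon>0$. Since $K$ is connected it lies in a single connected component $U'$ of $U$, and $U'$ is open, hence a domain with $U'\supset K$. Monotonicity of the extremal function in its defining set gives $\Lambda_{U'}\le\Lambda_K$, so by the disc formula $E_{U'}H_{\omega,0}(x)=\Lambda_{U'}(x)\le\Lambda<\Lambda+\epsilon$. By the definition of the envelope as an infimum there is then a disc $f\in\A_{\P^n}^{U'}\subset\A_{\P^n}^{U}$ with $f(0)=x$ and $H_{\omega,0}(f)<\Lambda+\epsilon$, which is the inequality demanded by (B). Note that $E_{U'}H_{\omega,0}(x)\le\Lambda<\infty$ already forces the family of admissible discs to be non-empty, so there is no issue of an empty infimum.

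For (B)$\Rightarrow$(A), I would first fix a decreasing sequence of connected open neighbourhoods $U_j$ of $K$ with $\bigcap_j U_j=K$, taken as the connected component containing $K$ of $\{\,y:d(y,K)<1/j\,\}$. Applying (B) to each $U_j$ and each $\epsilon>0$ yields $E_{U_j}H_{\omega,0}(x)\le\Lambda$, and the disc formula converts this into $\Lambda_{U_j}(x)\le\Lambda$ for every $j$. Since the $U_j$ decrease to $K$, Proposition~\ref{prop:Lambda_lim} gives $\Lambda_K(x)=\lim_{j\to\infty}\Lambda_{U_j}(x)\le\Lambda$, that is $x\in\hat K(\Lambda)$.

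Most of the real content is already packaged in Theorem~\ref{th:omega} and Proposition~\ref{prop:Lambda_lim}, so the argument is essentially an assembly. The only genuine obstacles are bookkeeping ones: translating the $(\forall\,U)(\forall\,\epsilon)(\exists\,f)$ phrasing of (B) into the clean inequality $E_U H_{\omega,0}(x)\le\Lambda$, and reducing an arbitrary neighbourhood to a connected one so that the disc formula is legitimately applicable. I expect the connectedness reduction to be the step most easily overlooked, precisely because the disc formula is being invoked silently on domains only.
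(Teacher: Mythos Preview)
Your proposal is correct and follows essentially the same approach as the paper: both directions are read through the disc formula $\Lambda_U=E_U H_{\omega,0}$ for connected neighbourhoods $U$ of $K$, with Proposition~\ref{prop:Lambda_lim} supplying the passage from $\Lambda_{U_j}$ to $\Lambda_K$. Your (A)$\Rightarrow$(B) is in fact slightly cleaner than the paper's, since you use the trivial monotonicity $\Lambda_{U'}\le\Lambda_K$ directly rather than invoking Proposition~\ref{prop:Lambda_lim} to produce a domain $V\subset U$ with $\Lambda_V(x)<\Lambda_K(x)+\epsilon/2$ (a condition that is automatic anyway).
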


\begin{proof}
 First assume $x\in \hat K(\Lambda)$. By Proposition \ref{prop:Lambda_lim}
 there is a domain $V$ such that 
 $K \subset V \subset U$ and $\Lambda_V(x) < \Lambda_K(x) + \epsilon/2$.
 By Theorem \ref{th:omega} there is a disc $f\in \A_{\P^n}^{V}$
 such that $f(0) = x$ and 
 $$
	- \frac{1}{2\pi}\int_\D \log|\cdot|\, f^*\omega < \Lambda_V(x)  + \frac \epsilon 2 
 $$
 Then 
 $$
	- \frac{1}{2\pi}\int_\D \log|\cdot|\, f^*\omega  \leq \Lambda_V(x) + 
	\frac \epsilon 2 \leq \Lambda_K(x) + \epsilon \leq \Lambda + \epsilon.
 $$
 
 Conversely, assume \textsl{(B)} holds. Now let
 $U_j$ decreasing sequence of domains such that $\cap_j U_j = K$ and
 $\lim_{j \to \infty} \Lambda_{U_j} = \Lambda_K$. The sets $U_j$ can be chosen
 connected because $K$ is always contained in one connected component of 
 an open neighbourhood of $K$ (otherwise $K$ would not be connected).
 
 For each $j$ there is a disc $f_j \in \A_{\P^n}^{U_j}$ such that
 $f_j(0) = x$ and 
 $$
	\Lambda_{U_j}(x) \leq -\frac{1}{2\pi} \int_\D \log|\cdot| f_j^*\omega < \Lambda + \frac 1j
 $$
 which implies $\Lambda_K(x) = \lim_{j \to \infty} \Lambda_{U_j}(x) \leq \Lambda$, that is $x\in \hat K(\Lambda)$.
\end{proof}

%%%%%%%%%%%%%%%%%%%%%%%%%%%%%%%%%%%%%%%%%%%%%%%%%%%%%%%%%%%%%%%%%%%%%%%%%%%
\subsection{Characterizations of Drnov\v sek and Forstneri\v c}
In \cite{DrnFor:2012} Drnov\v sek and Forstneri\v c gave several characterizations
of the projective and polynomial hulls using the existence of analytic disc, both
for connected sets and not connected sets. 
They used  analytic disc in $\P^n$ with the bounded lifting property obtained from
Poletsky's disc formula and 
disc in $\C^{n+1}\setminus \{0\}$ derived from the 
disc formula of the Siciak-Zahariuta extremal function \cite{LarSig:2005}.

The result from \cite{DrnFor:2012} regarding connected sets which is best compatible with 
Theorem \ref{th:hulls} is the following.
\begin{theorem}[\cite{DrnFor:2012}, Theorem 5.1]\label{th:hullsBF}
 Let $K$ be a compact connected set in $\P^n$. A point $p\in \C^{n+1}\setminus \{0\}$
 belongs to the polynomial hull of the set $S_K \subset \C^{n+1}$, and hence
 $x = \pi(p) \in \P^n$ belongs to the projective hull of $K$, if and only if
 there exists a sequence of analytic discs 
 $F_j:\overline \D \to \C^{n+1}\setminus \{0\}$
 such that $F_j(0) = p$ and 
 $$
	\lim_{j \to \infty} \max_{t \in [0,2\pi]} dist(F_j(e^{it}, S_K) = 0.
 $$ 
\end{theorem}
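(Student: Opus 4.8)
The plan is to read Theorem~\ref{th:hullsBF} off our own disc formula by lifting everything from $\P^n$ to $\C^{n+1}\setminus\{0\}$ through $\pi$, exactly as in the proof of Theorem~\ref{th:omega}. Abbreviate the $\omega$-mass of a disc by $M(f)=-\frac{1}{2\pi}\int_\D\log|\cdot|\,f^*\omega$. Taking $\phi=0$ in identity~(\ref{eq:H}) gives, for $\tilde f\in\A_{\C^{n+1}\setminus\{0\}}$ and $f=\pi\circ\tilde f$,
$$
  M(f)=\int_\T\log\|\tilde f\|\,d\sigma-\log\|\tilde f(0)\|.
$$
With $S_K=\{z;\ \pi(z)\in K,\ \|z\|=1\}$, on which $\log\|\cdot\|\equiv0$, Proposition~\ref{prop:P2Cn+1} identifies $\widehat{S_K}$ with $\{p;\ \log\|p\|\le-\Lambda_K(\pi(p))\}$, so that $\pi(\widehat{S_K})=\hat K$ and $p\in\widehat{S_K}$ exactly when $\Lambda_K(\pi(p))\le-\log\|p\|$.

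First I would dispatch the implication ``discs $\Rightarrow$ hull''. Given $F_j$ with $F_j(0)=p$ and $\max_t\operatorname{dist}(F_j(e^{it}),S_K)\to0$, the projections $f_j=\pi\circ F_j$ satisfy $f_j(0)=\pi(p)$, have boundaries inside any fixed neighbourhood $U$ of $K$ for $j$ large, and $\int_\T\log\|F_j\|\,d\sigma\to0$ because $\|\cdot\|\equiv1$ on $S_K$. The displayed identity then gives $M(f_j)\to-\log\|p\|$, while the disc-formula bound $\Lambda_U(\pi(p))\le M(f_j)$ of Theorem~\ref{th:omega} together with Proposition~\ref{prop:Lambda_lim} forces $\Lambda_K(\pi(p))\le-\log\|p\|$, i.e.\ $p\in\widehat{S_K}$.

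The substantial direction is ``hull $\Rightarrow$ discs''. Put $x=\pi(p)$ and $\Lambda=-\log\|p\|\ge\Lambda_K(x)$, and fix neighbourhoods $U_j\downarrow K$. I would first produce, for each $j$, a disc $f_j\in\A_{\P^n}^{U_j}$ with $f_j(0)=x$ and $\omega$-mass $M(f_j)\to\Lambda$; the lift $\tilde f_j$ with $\tilde f_j(0)=p$ then satisfies $\int_\T\log\|\tilde f_j\|\,d\sigma=M(f_j)-\Lambda\to0$ by the identity. To upgrade this mean control to the uniform estimate demanded by Theorem~\ref{th:hullsBF}, I would renormalise the modulus by an outer function $g_j$ that is zero-free on $\D$, continuous on $\overline\D$, and satisfies $|g_j|=1/\|\tilde f_j\|$ on $\T$; then $F_j=g_j\,\tilde f_j$ stays in $\C^{n+1}\setminus\{0\}$, has $\|F_j\|\equiv1$ on $\T$ and $\pi(F_j)=f_j\subset U_j$, so $F_j(\T)\subset\{\|z\|=1,\ \pi(z)\in U_j\}$ converges uniformly to $S_K$. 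Because $\log|g_j(0)|=-\int_\T\log\|\tilde f_j\|\,d\sigma\to0$, a rotation makes $g_j(0)\to1$, and a final rescaling by $g_j(0)^{-1}$ restores $F_j(0)=p$ exactly.

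The crux, and the step I expect to fight with, is producing $\P^n$-discs of \emph{prescribed} mass $M(f_j)\to\Lambda$ with boundaries in $U_j$: Theorem~\ref{th:hulls} supplies discs with $M(f_j)<\Lambda+\tfrac1j$, but only pins the \emph{infimum} of the mass at $\Lambda_{U_j}(x)$, which for $p$ strictly inside the fibre (i.e.\ $\Lambda>\Lambda_K(x)$) is strictly smaller than $\Lambda$. I would close this gap by showing the set of achievable masses over $x$ is the whole interval $[\Lambda_{U_j}(x),\infty)$, obtained by continuously deforming the extremal disc so as to sweep additional Fubini--Study area while keeping its centre at $x$ and its boundary in $U_j$; the intermediate value $\Lambda$ is then attained. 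This deformation is precisely the point where one expects to need a genuine disc construction of Forstneri\v{c} type rather than the disc formula alone, and it is where the proof's real work lies.
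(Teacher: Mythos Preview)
The paper does not prove Theorem~\ref{th:hullsBF}; it is quoted verbatim from \cite{DrnFor:2012}. What the paper \emph{does} prove is the equivalence of the two disc conditions (B) and (B') in its own Theorems~\ref{th:hulls} and~\ref{th:hulls2}, and your lift-and-renormalise construction is essentially the paper's argument for ``(B) implies (B')'': lift $f$ to $\tilde f_0$, divide by an exponential of the Poisson extension of $\log\|\tilde f_0\|$ (the paper uses $\exp(u(r\,\cdot)+iv(r\,\cdot))$ with $r<1$ precisely to sidestep the boundary-continuity issue you glossed over for the outer function), and observe the resulting disc has boundary near $S_K$.

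However, the gap you flag at the end is genuine and is exactly the gap the paper itself acknowledges. In the Remark following the (B)$\Leftrightarrow$(B') discussion the author notes that Theorem~\ref{th:hullsBF} lets one prescribe \emph{any} $p$ in the polynomial hull of $S_K$ as the centre, whereas the paper's methods only produce discs with centre $p$ satisfying $\|p\|$ arbitrarily close to $\rho_K(x)=e^{-\Lambda_K(x)}$, i.e.\ only the extremal fibre point. Your proposed fix---showing the set of attainable $\omega$-masses over $x$ fills out the whole interval $[\Lambda_{U_j}(x),\infty)$ by a continuous deformation---is plausible but is not supplied by anything in this paper, and you yourself concede it ``is where the proof's real work lies.'' So as a proof of the full Theorem~\ref{th:hullsBF} your proposal is incomplete at precisely the point where the paper's own techniques run out; the actual proof in \cite{DrnFor:2012} uses a different argument (approximation on totally real sets and the Poletsky disc machinery in $\C^{n+1}$ directly, rather than going through the $\omega$-formula on $\P^n$).
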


The set $S_K$ is the lifting of $K$ restricted to the unit sphere in $\C^{n+1}$,
$S_K = \pi^{-1}(K) \cap \{ z \in \C^{n+1} ; \|z\| = 1 \}$.

\smallskip

Restating the theorem above without the limit and focusing on the point 
$x \in \P^n$  %in the same way as Theorem \ref{th:hulls} 
we can state it as follows.
\begin{theorem}\label{th:hulls2}
 For a point $x$ in a connected compact subset 
 $K\subset \P^n$ the following is equivalent.
 \begin{itemize}
  \item[(A')] $x \in \hat K$.
  \item[(B')] There exists $p \in \pi^{-1}(x)$ such that
		for every $\epsilon > 0$ and every neighbourhood $\tilde U$ of 
		$S_K$ there exists a disc $\tilde f \in \A_{\C^{n+1}\setminus \{0\}}^{\tilde U}$ 
		such that $\tilde f(0) = p$.
 \end{itemize}
\end{theorem}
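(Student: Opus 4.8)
The plan is to read off Theorem~\ref{th:hulls2} from the projective disc formula of Theorem~\ref{th:hulls} by lifting discs from $\P^n$ to $\C^{n+1}\setminus\{0\}$. The bridge is the identity established in the proof of Theorem~\ref{th:omega}: if $f\in\A_{\P^n}^U$ lifts to $\tilde f\in\A_{\C^{n+1}\setminus\{0\}}$ with $\pi\circ\tilde f=f$, then
\[
  -\frac{1}{2\pi}\int_\D \log|\cdot|\, f^*\omega
  = \int_\T \log\|\tilde f\|\, d\sigma - \log\|\tilde f(0)\|.
\]
The key observation is that the boundary condition $\tilde f(\T)\subset\tilde U$, where $\tilde U$ is a neighbourhood of $S_K\subset\{\|z\|=1\}$, forces $\log\|\tilde f\|$ to be small on $\T$, so that $\int_\T\log\|\tilde f\|\,d\sigma\approx 0$ and the projective area functional is essentially $-\log\|\tilde f(0)\|$. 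I would fix throughout the normalization $\|p\|=e^{-\Lambda_K(x)}$, so that $-\log\|p\|=\Lambda_K(x)$, and recall that by Proposition~\ref{prop:hatK_Lambda} the condition $x\in\hat K$ is the same as $\Lambda_K(x)<\infty$, i.e.\ $x\in\hat K(\Lambda)$ with $\Lambda=\Lambda_K(x)$.

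I would treat \textbf{(B')}$\Rightarrow$\textbf{(A')} first, as it is the cleaner half. Given such a $p$, any neighbourhood $U$ of $K$ and any $\delta>0$, I apply (B') to the tube $\tilde U=\{z;\pi(z)\in U,\ |\log\|z\||<\delta\}$, which is a neighbourhood of $S_K$, obtaining a disc $\tilde f$ with $\tilde f(0)=p$. Its projection $f=\pi\circ\tilde f$ lies in $\A_{\P^n}^U$, and the identity above together with $|\log\|\tilde f\||<\delta$ on $\T$ gives $-\frac{1}{2\pi}\int_\D\log|\cdot|\,f^*\omega<\delta-\log\|p\|$. Hence $\Lambda_U(x)\le\delta-\log\|p\|$ by Theorem~\ref{th:omega} with $\phi=0$, and letting $U\downarrow K$ and $\delta\to0$ while using Proposition~\ref{prop:Lambda_lim} yields $\Lambda_K(x)\le-\log\|p\|<\infty$, so $x\in\hat K$.

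For \textbf{(A')}$\Rightarrow$\textbf{(B')} I would start from condition (B) of Theorem~\ref{th:hulls} with $\Lambda=\Lambda_K(x)$. Given an arbitrary neighbourhood $\tilde U$ of $S_K$, compactness of $S_K$ furnishes a tube $\{z;\pi(z)\in U,\ |\log\|z\||<\delta\}\subset\tilde U$; shrinking $U$ I may assume $\Lambda_U(x)$ is within $\delta$ of $\Lambda_K(x)$ by Proposition~\ref{prop:Lambda_lim}, and by Theorem~\ref{th:omega} I take a near-optimal disc $f\in\A_{\P^n}^U$ with $f(0)=x$ whose area functional is within $\delta$ of $\Lambda_U(x)$. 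Lifting $f$ to some $\tilde f_0$, I flatten its boundary norm by multiplying with the zero-free holomorphic $g=\exp(u+i\tilde u)$, where $u$ is the Poisson extension of $-\log\|\tilde f_0\|\,|_\T$ and $\tilde u$ its harmonic conjugate; then $\tilde f_1=g\tilde f_0$ has $\|\tilde f_1\|\equiv 1$ on $\T$ and, by the identity, $\log\|\tilde f_1(0)\|$ equals minus the area functional, so $\|\tilde f_1(0)\|$ lies within a factor $e^{\pm 2\delta}$ of $\|p\|$. A final multiplication by the constant $c\in\C^*$ with $c\,\tilde f_1(0)=p$ (possible since $\pi(\tilde f_1(0))=x$) gives $\tilde f=c\,\tilde f_1$ with $\tilde f(0)=p$, boundary on the sphere of radius $|c|=\|p\|/\|\tilde f_1(0)\|\in(e^{-2\delta},e^{2\delta})$ and projecting into $U$; for $\delta$ small this lands $\tilde f(\T)$ inside the tube, hence in $\tilde U$.

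The main obstacle is exactly this last normalization in the forward direction: the requirement that the centre be \emph{equal} to the fixed $p$, and not merely project to $x$, leaves no freedom to adjust the centre's norm without scaling the boundary off the unit sphere. The construction closes only because the centre norm of the flattened lift is pinned to $e^{-(\text{area functional})}$, and $\Lambda_U(x)\uparrow\Lambda_K(x)$ forces this to converge to $\|p\|=e^{-\Lambda_K(x)}$, so the compensating constant $c$ has modulus tending to $1$ and the boundary stays within the prescribed neighbourhood of $S_K$. A secondary technical point is the boundary regularity of the harmonic conjugate $\tilde u$: to guarantee that $g$, and hence $\tilde f$, is continuous up to $\T$ I would first replace $f$ by $f(r\,\cdot)$ for $r<1$, as in the Remark of Section~\ref{sec:intro}, making the boundary data smooth while changing the area functional by as little as desired.
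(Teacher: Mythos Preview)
Your argument follows the same route the paper takes in its discussion around Theorem~\ref{th:hulls2}: lift discs between $\P^n$ and $\C^{n+1}\setminus\{0\}$, use the Riesz identity to rewrite the projective area functional as $\int_\T\log\|\tilde f\|\,d\sigma-\log\|\tilde f(0)\|$, and in the forward direction flatten the boundary norm by dividing by $\exp(u+iv)$ with $u$ the harmonic extension of $\log\|\tilde f_0\|\big|_\T$. The paper, however, does not prove Theorem~\ref{th:hulls2} from scratch: it cites it as a restatement of Drnov\v sek--Forstneri\v c and then only shows how (B) and (B') relate. In the closing Remark it explicitly concedes that its (B)$\Rightarrow$(B') construction produces a centre $p$ depending on the chosen disc, so that only $\|p\|$ is pinned down near $\rho_K(x)$, not $p$ itself. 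Your final multiplication by the scalar $c$ with $c\,\tilde f_1(0)=p$ closes exactly this gap: since the flattened centre satisfies $-\log\|\tilde f_1(0)\|=\text{(area functional)}\in(\Lambda_K(x)-\delta,\Lambda_K(x)+\delta)$, one has $|c|\in(e^{-\delta},e^{\delta})$, the rescaled boundary stays in the tube, and you obtain a disc centred at the \emph{fixed} $p$ for every $\tilde U$. This is a small but genuine improvement over the paper's discussion and makes your proof self-contained.

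Two minor points. In the (B')$\Rightarrow$(A') half you are \emph{given} $p$ by hypothesis, so the normalization $\|p\|=e^{-\Lambda_K(x)}$ is neither needed nor available there; your tube argument goes through for whatever $p$ is handed to you and yields $\Lambda_K(x)\le-\log\|p\|$. And for the boundary regularity of the conjugate, the paper divides by $\exp\bigl(u(r\cdot)+iv(r\cdot)\bigr)$ for $r<1$ rather than precomposing $f$ with a dilation; either device works.
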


Note that $x$ is in $\hat K$ if and only if there exists a point $p \in \pi^{-1}(x)$
which is in the polynomial hull of $S_K$ in $\C^{n+1}$ by
\cite[Corollary 5.3]{HarLaw:2006}.
Furthermore, if we define
$$
	\rho_K(x) = 
	\sup\{ \|p\| ; p \in \pi^{-1}(x) \text{ and $p$ is in the polynomial hull of } S_K \},
$$
then we have following connection between 
the global extremal function $\Lambda_K$, the best constant function $C_K$ and 
the function $\rho_K$, see \cite[Proposition 5.2]{HarLaw:2006},
\begin{equation}\label{eq:LambdaCrho}
	\Lambda_K = \log C_K = - \log \rho_K.
\end{equation}
The definition of $\rho_K$ implies that condition (B') holds for $p$ such that 
$\|p\| \leq \rho_K(x)$.

Since both condition (B) and (B') characterize the projective hull
they are clearly equivalent. But for completeness we show how 
one can be derived from the other.

\medskip

%%%%%%%%%%%%%%%%%%%%%%%%%%%%%%%%%%%%%%%%%%%%%%%%%%%%%%%%%%%%%%%%%%%%%%%%%%%%%%
\textbf{(B') implies (B):}
Let $x \in \hat K(\Lambda)$, where $\Lambda > \Lambda_K(x)$.
Fix $\epsilon >0$ and let $U$ be an open neighbourhood 
of $K$ in $\P^n$. From the definition of $\rho_K$ and (\ref{eq:LambdaCrho}) 
there is $p \in \pi^{-1}(x)$
such that
$-\log \|p\|  < \Lambda$. 
Then by (B') there is a disc $\tilde f$ such that $\tilde f(0) = p$ and
$$
	\max_{t\in \T}\,  \log\|\tilde f(t)\| \leq \epsilon.
$$	
That is, we let $\tilde U = \{ z \in \C^{n+1} ; \|z\| < \exp(\epsilon) \}$.
Now define $f = \pi \circ \tilde f$. 
Using the Riesz representation theorem for the function $\log\|\tilde f\|$
at the point 0 we get
\begin{align*}
	\log\|p\| &= \frac{1}{2\pi}\int_\D \log|\cdot|\, \Delta \log\|\tilde f\| + 
	\int_\T \log\|\tilde f\|\, d\sigma 
\end{align*}
Using the inequality $-\log \|p\| < \Lambda$ and the fact that 
$f^*\omega = \Delta \log\|\tilde f\|$ we derive the following.
\begin{align*}
	-\frac{1}{2\pi}\int_\D \log|\cdot|\, f^*\omega 
	&= -\frac{1}{2\pi}\int_\D \log|\cdot|\, \Delta \log\|\tilde f\| \\
	&= -\log\|p\| + \int_\T \log\|\tilde f\|\, d\sigma \\
	&\leq \Lambda + \max_{t \in \T}\, \log\|\tilde f(t)\| \\
	&\leq \Lambda + \epsilon.
\end{align*}

%%%%%%%%%%%%%%%%%%%%%%%%%%%%%%%%%%%%%%%%%%%%%%%%%%%%%%%%%%%%%%%%%%%%%%%%%%%%%%
\textbf{(B) implies (B'):}
Given a neighbourhood $\tilde U$ of $S_K$. 
Let $U = \pi(\tilde U)$ and $f\in \A_{\P^n}^U$ be a disc as in (B).
Then there is a lifting 
$\tilde f_0 \in \A_{\C^{n+1}\setminus \{0\}}$
such that $f = \pi \circ \tilde f_0$, see e.g.~\cite[\S 3]{DrnFor:2012}. 
Note that $\tilde f_0(\T) \subset \pi^{-1}(U)$ since $f(\T) \subset U$.

Now let $u$ be the solution of the
Dirichlet problem on the unit disc with boundary values $\log\|\tilde f_0\|$,
and let $v$ be the corresponding harmonic conjugate. Note that $v$ is harmonic
on $\D$ but not necessarily continuous up to the boundary. 
However, since 
$$
	\left\| \frac{\tilde f_0(t)}{\exp(u(t))} \right\| = 
	\left\| \frac{\tilde f_0(t)}{\tilde f(t)}\right\| = 1, 
	\qquad \text{for all } t \in \T,
$$
there is $r < 1$ such that 
$\tilde f_0(t) /\exp(u(rt)) \in \tilde U$ for all $t\in \T$.
If we then define the closed analytic disc $\tilde f$ by 
$$
	\tilde f(t) = \frac{\tilde f_0(t)}{\exp(u(rt) + iv(rt))}
$$
we see that $\tilde f$ maps the unit circle into $\tilde U$ and $p = \tilde f(0)
\in \pi^{-1}(x)$. Since $r<1$ the function $\exp(u(r\cdot) + iv(r\cdot))$ is 
continuous on $\overline \D$ and therefore 
$\tilde f \in \A_{\C^{n+1}\setminus\{0\}}^{\tilde U}$.

Furthermore, if $x \in \hat K(\Lambda)$, $\Lambda > 0$ and $\epsilon >0$, 
then we can select $f$ such that 
$$
		- \frac{1}{2\pi}\int_\D \log|\cdot| f^*\omega < \Lambda + \epsilon,
$$
and we can select $r$ close enough to 1 so that 
$$
	-\epsilon < \int_\T \log\|\tilde f(t)\|\, d\sigma < \epsilon.
$$

This implies, by similar calculations as before, that
\begin{align}\label{ineq:rho_eps}
	-\log\|p\| 
	&= -\frac{1}{2\pi}\int_\D \log|\cdot|\, \Delta \log\|\tilde f\| 
		- \int_\T \log \|\tilde f\|\, d\sigma \nonumber\\ 
	&\leq - \frac{1}{2\pi}\int_\D \log|\cdot|\, f^*\omega + \epsilon \nonumber\\
	&\leq \Lambda + 2 \epsilon \nonumber\\
	&\leq \Lambda_K(x) + 2\epsilon = -\log \rho_K(x) + 2\epsilon.
\end{align}
By definition we have $\|p\| \leq \rho_K(x)$. This, along with 
inequality (\ref{ineq:rho_eps}) above, shows that
\begin{equation}\label{ineq:p_rho}
	e^{2\epsilon} \rho_K(x) \leq \|p\| \leq \rho_K(x).
\end{equation}
In other words, we can choose $p$ such that $\|p\|$ is arbitrary close 
to $\rho_K(x)$.

\newpage

\textbf{Remark:} 
There are two details that should be pointed out here:
\begin{itemize}
 \item The results in \cite{DrnFor:2012} and Theorem \ref{th:hulls2} are not 
quantitative, in other words they do not use $\Lambda$ and $\hat K(\Lambda)$
as Theorem \ref{th:hulls} does. 
 \item Theorem \ref{th:hullsBF} implies that for every $p$ in the 
polynomial hull of $S_K$ there exists a disc with the properties in (B') and
center $p$. On the other hand, Theorem \ref{th:hulls} does not allow us to select the 
center $p$ specifically. It only allows us to find
discs such that the modulus of its center is as close to $\rho_K(x)$ as we wish,
as seen from (\ref{ineq:p_rho}).
\end{itemize}

\bigskip

The author wishes to thank Franc Forstneri\v c and Josip Globevnik for referring him
to the results in 
\cite{DrnFor:2012} and asking about the connections between the characterizations
of the projective hull there and the one in Theorem \ref{th:hulls}.

%%%%%%%%%%%%%%%%%%%%%%%%%%%%%%%%%%%%%%%%%%%%%%%%%%%%%%%%%%%%%%%%%%%%%%%%%%%%%%
%%%%%%%%%%%%%%%%%%%%%%%%%%%%%%%%%%%%%%%%%%%%%%%%%%%%%%%%%%%%%%%%%%%%%%%%%%%%%%
\section{Siciak-Zahariuta extremal function and other global extremal functions}\label{sec:S-Z}
There are other quasiplurisubharmonic function of interest in $\P^n$. The most
known are those when the current is the current of integration $[H_\infty]$ for the 
hyperplane at infinity $H_\infty$. Then we are looking at $\P^n$ as the
union of $\C^n$ and $H_\infty$. The class of quasiplurisubharmonic functions
$\PSH(\P^n,[H_\infty])$ then becomes the Lelong-class 
$$
	\L = \{ u \in \PSH(\C^n); u(x) \leq \log\|x\| + C_u \}.
$$

The potential for the pullback of this current to $\C^{n+1} \setminus \{0\}$, 
denoted $\pi^* [H_\infty]$,
then has a global potential and can be written as 
$\pi^* [H_\infty] = dd^c \log|z_0|$, assuming 
$H_\infty = \pi(\{ z \in \C^{n+1}\setminus \{0\} ; z_0 = 0 \})$.

The \emph{Siciak-Zahariuta extremal function} for a set $W$ is defined 
as 
$$ 
	\sup \{ u(x) ; u \in \L, u|_W \leq 0 \},
$$
and the weighted version as 
$$ 
	\sup \{ u(x) ; u\in \L, u|W \leq \phi \},
$$
where $\phi:W \to \overline \R$ is a function.

If $W\subset \C^n$ is a domain and $\phi:W\to \R\cup \{-\infty\}$ is an 
upper semicontinuous function then there is a disc formula for the 
Siciak-Zahariuta function \cite{LarSig:2005, MagSig:2007},
\begin{multline*}
%  \sup\{ u(x) ; u \in \L, u|_W \leq \phi \} = \\
  \sup\L_\phi (x) = \\
  \inf \left\{ -\sum_{a \in f^{-1}(H_\infty) } \log|a| +
  \int_\T \phi \circ f\, d\sigma\ ;\ f \in \A_{\P^n}^W, f(0) = x \right\}.
\end{multline*}
There is also a formula when $W$ is not connected \cite{LarSig:2009}, 
but it is a little bit different and somewhat unwieldier. 

The formula above can be proven easily by the same methods as the formula in
Theorem \ref{th:omega} by replacing $\log\|z\|$ with $\log|z_0|$.

We only have to note two things. First, a function $u \in \L$ extends to a 
plurisubharmonic and logarithmically homogeneous function 
$\tilde u: \C^{n+1}\setminus \{0\} \to \R \cup \{-\infty\}$, by
\begin{equation}\label{eq:L_lifting}
  \tilde u(z_0,z_1,\ldots,z_n) 
  = u \left(\frac{(z_1,\ldots,z_n)}{z_0}\right) + \log|z_0| 
\end{equation}

Secondly, 
if $f = [f_0:f_1:\cdots:f_n] \in \A_{\P^n}$, then 
$$\int_D \log|\cdot|\, f^*[H_\infty] 
= \sum_{a \in f^{-1}(H_\infty)} m_a \log|a|,
$$
where $m_a$ is the multiplicity of the zero of $f_0$ at $a$.
However, by Proposition 1 in \cite{LarSig:2005} the multiplicity $m_a$ 
can by omitted, because for a disc $f$ with zero of 
order $m_a$ at $a$ there is disc with $m_a$ different simple zeros 
sufficiently close to $a$. This implies that the multiplicity $m_a$ 
can be omitted in the disc formula above.

\medskip

\textbf{Remark:}
The methods described here actually apply to every current $\tilde \omega$ 
on $\P^n$, such that the pullback to $\C^{n+1}\setminus \{0\}$, 
$\pi^*\tilde \omega$, 
has a logarithmically
homogeneous potential 
$\psi:\C^{n+1}\setminus \{0\} \to \R \cup \{-\infty\}$ 
with $dd^c \psi = \pi^*\tilde \omega$.

%%%%%%%%%%%%%%%%%%%%%%%%%%%%%%%%%%%%%%%%%%%%%%%%%%%%%%%%%%%%%%%%%%%%%%%%%%%%%%
%%%%%%%%%%%%%%%%%%%%%%%%%%%%%%%%%%%%%%%%%%%%%%%%%%%%%%%%%%%%%%%%%%%%%%%%%%%%%%
%\listoftodos

\bibliographystyle{siam}
\bibliography{bibref}

\end{document}